\theoremstyle{plain}
\newtheorem{lemma}{Lemma}
\newtheorem{proposition}{Proposition}
\newtheorem{corollary}{Corollary}
\newtheorem{theorem}{Theorem}
\theoremstyle{definition}
\newtheorem{definition}{Definition}
\newtheorem{remark}{Remark}
\newcommand{\new}{\newcommand}
\new{\set}[1]{\{{#1}\}}
\new{\rone}{{\mathbb R}}
\new{\nat}{{\mathbb N}}
\newcommand{\argmin}{\operatornamewithlimits{argmin}}
\new{\prox}{\operatorname{prox}}
\new{\proxj}{\prox_J^{H(x_n)}}
\new{\Prob}[1]{\mathrm{Pr}\left(\, #1 \right)}
\new{\E}{{\mathrm E}}
\new{\eps}{\varepsilon}
\new{\nor}[1]{\|{#1}\|}
\new{\norr}[1]{\left\|{#1}\right\|_\rho}
\new{\scal}[2]{\langle{#1},{#2}\rangle}
\new{\eqn}[1]{~(\ref{#1})}
\new{\norh}[1]{\left\|{#1}\right\|_\hh}
\new{\noru}[1]{|\!|\!|{#1}|\!|\!|}
\new{\scalh}[2]{\langle{#1},{#2}\rangle_{\hh}}
\new{\scaldue}[2]{\left\langle{#1},{#2}\right\rangle_{\rho}}
\new{\bg}{\begin}
\new{\beeq}[2]{\begin{equation}\label{#1}{#2}\end{equation}}
\new{\room}{\ \ \ \ }
\new{\marg}{\rho_X}
\new{\prob}{\rho}
\new{\dm}{d}
\new{\dd}{D}
\new{\ts}{\vz}
\new{\n}{n}
\new{\vz}{{\mathbf z}}
\new{\vx}{{\mathbf x}}
\new{\vy}{{\mathbf y}}
\new{\vs}{{\mathbf s}}
\new{\vv}{{\mathbf v}}
\new{\kk}{K}
\new{\esp}[1]{{\mathcal E}({#1})}
\new{\emp}[1]{{\mathcal E_\vz}({#1})}
\new{\hh}{{\mathcal H}}
\new{\ldue}{L^2(X,\marg)}
\new{\lp}{L^p(X,\marg)}
\new{\la}{\lambda}
\new{\pone}{\tau}
\new{\ptwo}{\mu}
\new{\tah}{f_\hh}
\new{\regr}{f_\rho}
\new{\fz}{f_\ts}
\new{\fzl}{f_{\ts}^\la}
\new{\phiv}{{\mathcal E}_{\pone}}
\new{\F}{F}
\new{\ff}{f^*}
\new{\som}[2]{\sum_{#1=1}^{#2}}
\new{\K}{{\mathrm K}}
\new{\Z}{{\mathrm Z}}
\new{\LL}{{\mathrm L}}
\new{\St}[1]{\mathbf{S}_{#1}}
\new{\st}[1]{S_{#1}}
\new{\J}{J}
\new{\tv}{\mathcal J}
\new{\FF}{\mathcal F}
\new{\dv}{\textrm{div}}
\new{\w}{\beta}
\new{\vw}{{\mathbf \w}}
\new{\one}{{\mathbf 1}}
\new{\vf}{\textrm{f}}
\new{\vg}{\textrm{g}}
\new{\dom}{X}
\new{\al}{\alpha}
\new{\be}{\beta}
\new{\id}{I}
\new{\TT}{\mathcal T_\sigma}
\new{\yy}{y}
\new{\B}{B_\sigma}
\new{\jj}{j}
\new{\jjj}{j'}
\new{\jjjj}{\gamma}
\new{\ii}{i}
\new{\iii}{i'}
\new{\iiii}{k}
\new{\itext}{p}
\new{\itint}{q}
\new{\xii}{x_\ii}
\new{\yi}{y_\ii}
\new{\dej}{\partial_\jj}
\new{\step}{\sigma}
\new{\GG}{{\mathcal G}}
\new{\Gk}{{\mathcal G}_k}
\new{\PP}{P}
\new{\kernel}{k}
\new{\ud}{\,\mathrm{d}}
\begin{document}

\title{Convergence analysis of a proximal Gauss-Newton method}

\author{Saverio Salzo$^1$\and Silvia Villa$^2$}
\maketitle

\begin{center}
$^1$ DISI, Universit\`a di Genova, \\
Via Dodecaneso 35, 16146 Genova, Italy, salzo@disi.unige.it\\
$^2$ DIMA \& DISI, Universit\`a di Genova, \\
Via Dodecaneso 35, 16146 Genova, Italy, villa@dima.unige.it
\end{center}

\begin{abstract} An extension of the Gauss-Newton algorithm is proposed to find local minimizers of penalized nonlinear least squares problems, under generalized Lipschitz assumptions.  Convergence results of local type are obtained, as well as an estimate of the radius of the convergence ball.
Some applications for solving constrained nonlinear  equations are discussed and  the numerical performance of the method is assessed on some significant test problems.\\

\textbf{Keywords} {Gauss-Newton method \and Penalized nonlinear least squares \and  Proximity operator \and Lipschitz conditions with $L$ average}\\

 \textbf{Mathematics Subject Classification (2000)}{ MSC 65J15 \and MSC 90C30 \and MSC 47J25}
\end{abstract}

\section{Introduction}

Given Hilbert  spaces $X$ and $Y$, a Fr\'echet differentiable nonlinear operator $F:X\rightarrow Y$,  and a convex lower semicontinuous penalty functional $J:X \to \mathbb{R}\cup\{+\infty\}$, we consider the optimization problem
\[
\tag{$\mathcal{P}$} \min_{x \in X}\, \frac{1}{2} \nor{F(x)-y}^2 + J(x):=\Phi(x).
\]
Problem $(\mathcal{P})$ is in general a nonconvex and nonsmooth problem, having on the other hand a particular structure: it is in fact the sum of a nonconvex, smooth term and a convex and possibly  nonsmooth one. The aim of the paper is to find a convergent algorithm towards a local minimizer of $\Phi$, assuming that it exists.
Motivated by several applications  \cite{EngHanNeu96,SchGraGro09,FloPar90}, problem $(\mathcal{P})$ is receiving an increasing attention. 
In particular, for $J=0$,  $(\mathcal{P})$ is a  classical nonlinear least squares problem \cite{DenSch96,Xu09}.  This kind of problems can be solved by general optimization methods, but typically is solved by more efficient ad hoc methods. In many cases they achieve better than linear convergence, sometimes even quadratic, even though they do not need computation of second derivatives. Among the various approaches, one of the most popular is the {\em Gauss-Newton method}, introduced in \cite{Ben65}:
\beeq{eq:GNM}
{x_{n+1}=x_n- [F'(x_n)^* F'(x_n)]^{-1} F'(x_n)^* (F(x_n)-y).}
Under suitable assumptions, such a procedure is  convergent to a stationary point of  $x\mapsto\frac{1}{2} \nor{F(x)-y}^2$, namely to a point $\bar{x}$ such that $F'(\bar{x})^*(F(\bar{x})-y)=0$. Moreover,  one can easily show that the point $x_{n+1}$ defined in \eqref{eq:GNM} is the minimizer of the ``linearized'' functional:
\beeq{eq:lin}
{
x\mapsto \frac{1}{2}\nor{F(x_n)+F'(x_n)(x-x_n)-y }^2.
}
There is a wide literature devoted to the study of convergence results for the Gauss-Newton method under different perspectives. In particular we can distinguish two main streams of research: the papers devoted to a local analysis, and the ones devoted to semilocal results. The first class of studies \cite{DenSch96,ArgHil09,ArgHil10,LiZhaJin04} assume the existence of a local minimizer, and they actually determine a region of attraction around that point, meaning that if the starting point is chosen inside that region the iterative process is guaranteed to converge towards the minimizer. On the contrary, the semilocal results --- also known as Kantorovich type theorems --- do not assume the existence of a local minimizer, they just establish sufficient conditions on the starting point in order to make the   iterative procedure  convergent towards a point that is proved to be a local minimizer \cite{Arg05,FerSva09,Hau86,LiHuWan10}. 

In this paper we propose a generalization of the Gauss-Newton algorithm to the case in which $J\not=0$, that reads as follows:
\beeq{eq:prox_GN}{
x_{n+1}=\proxj\left(x_n- [F'(x_n)^* F'(x_n)]^{-1} F'(x_n)^* (F(x_n)-y)\right),}
where $\proxj$ is the proximity operator associated to $J$ (see \cite{Mor62,Mor63,Mor65}), with respect to the metric defined by the  operator  $H(x_n):=F'(x_n)^* F'(x_n)$. 
The algorithmic framework in \eqref{eq:prox_GN} is determined following the same line of \eqref{eq:lin}, i.e. linearizing the  functional $F$ at the point $x_n$, and computing the minimizer of the corresponding ``linearized'' functional
\[
x\mapsto \frac{1}{2}\nor{F(x_n)+F'(x_n)(x-x_n)-y }^2 + J(x)
\]
 This approach is the common way to deal with generalizations of the Gauss-Newton method, as we better explain in the next section.
The convergence results we obtain are of local type, and they are comparable to  those obtained for the  classical Gauss-Newton method. In particular, we get linear convergence in the general case, and quadratic convergence for zero residual problems. Furthermore, we are able to give an estimate of the radius of the convergence ball around a local minimizer.
It should be noted that the computation of the proximity operator is in general not straightforward and it may require an iterative algorithm itself, since in general a closed form is not available. 
On the other hand, we could have denoted $x_{n+1}$ simply as the minimizer of the generalized version of \eqref{eq:lin}. The formulation in terms of proximity operators allows to use the well developed theory on this kind of  operators, and in our opinion enlightens the connections and the differences with other first order methods that have recently been proposed to solve problem $(\mathcal{P})$ (see the next section for further details).  

The paper is organized as follows: we start with an analysis of the state-of-the art literature on related problems in Section \ref{sec:comp}, and then in Section \ref{sec:prel} we review the basic concepts that will be used: generalized Lipschitz conditions, generalized inverses and proximity operators. In Section \ref{sec:sett} the minimization problem is precisely stated and some necessary conditions satisfied by local minimizers are presented.  The main result of the paper, Theorem \ref{thm:1}, is discussed in Section \ref{sec:algo} and proved in Section \ref{sec:proof}. Section \ref{sec:app} gives an application to the problem of constrained nonlinear equations and, finally, in Section \ref{sec:num}  numerical tests are   set up and analyzed.

\section{Comparison with related work}\label{sec:comp}

We review the available algorithms to solve this problem, enlightening the connections and the differences with our approach.

\paragraph{Convex composite optimization} Problem $(\mathcal{P})$ can be cast, in principle, as a composite optimization problem of the form
\beeq{eq:conv_comp}
{
\min_{x\in X}\, h(c(x)),
}
by setting $h: Y\times X\to \mathbb{R}\cup\{+\infty\}$ and $c:X\rightarrow Y\times X$ as follows: 
\beeq{eq:h_and_c}
{h(s,v)=\nor{s}^2+J(v),\qquad c(x)=(F(x)-y,x).} 
Problem \eqref{eq:conv_comp} has been deeply studied in \cite{BurFer95,LewWri08,LiNg07,LiWan02,Wom85,WomFle86} from different point of views, mainly in the case $X=\mathbb{R}^n$ and $Y=\mathbb{R}^m$, but the hypotheses significantly differ, as well as the obtained results.  More specifically, in \cite{LewWri08},  the assumptions are too general to capture the features of problem $(\mathcal{P})$, and  allow only to get convergence results much weaker than the ones obtained in Theorem \ref{thm:1}.
Regarding all the remaining papers, as a matter of fact,  the following special case of {\em inclusion problem} is treated
\beeq{eq:conv_incl}
{
c(x) \in C, \qquad C = \argmin h.
}
In particular, the existence of an $x$ such that $c(x) \in  \argmin h$ is always assumed. That hypothesis is of course reasonable if we think of $h$ as a kind of norm, but if we take it as in \eqref{eq:h_and_c}, then $C=\{(s,v)\,:\, s=0, v\in\argmin J\}$ and we are lead to the condition
\[\exists\,x\in X,\quad  F(x)=y \text{ and } x \in \argmin J\]
which is too demanding for our original problem $(\mathcal{P})$.

\paragraph{Nonlinear inverse problems with regularization} Here the problem is to solve the nonlinear equation
\beeq{eq:eq}{F(x)=y} in the ill-posed case. Typically a solution is found by introducing  a regularization term weighted with a positive parameter. There are two possible approaches. The first one employs {\em iterative methods} which deal directly with problem \eqref{eq:eq}, see \cite{BacBur09}. In this case an iterative process is set up by minimizing at each step a simplified regularized problem (generally linear) having the structure of $(\mathcal{P})$ --- with a  weight for $J$ varying at each iteration. Within this class of methods, one popular choice is the {\em iteratively regularized Gauss-Newton method}, see \cite{BlaNeuSch97,Jin08,KalHof10,Lan10}.  Anyway, we remark that, despite the name, that algorithm is different from any kind of Gauss-Newton optimization method above, since it is not designed to ``optimize'' any objective functional $\Phi$, but, in fact, it directly looks for an exact solution of the equation \eqref{eq:eq}.

An alternative approach is the classical {\em Tikhonov method} \cite{EngHanNeu96} replacing \eqref{eq:eq} by the minimization of the associated Tikhonov functional. A problem  of the same type of $(\mathcal{P})$ arises,  with  $J$ weighted by a properly chosen parameter. Up to our knowledge, besides our method, the papers by Ramlau and Teschke, e.g. \cite{RamTes05,RamTes06}, are the only ones providing algorithms for the minimization of such type of functionals. In addition, the scheme they propose is different from ours and  in general it converges only weakly. On the other hand, our algorithm assumes the derivative $F^\prime(x)$ to be injective with closed range --- a hypothesis not suitable for handling the ill-posed case.

\section{Preliminaries and notations}\label{sec:prel}

We start with some notations. In the whole paper $X$ and $Y$ denote a Hilbert  space and $\Omega$ is a nonempty open subset of $X$. $F:\Omega\subseteq X\to Y$ is an operator, in general nonlinear, and $F'$ denotes its derivative (Fr\'echet or G\^ateaux). $\mathbb{L}(X,Y)$ is the space of bounded linear operators from $X$  to $Y$. If $A\in\mathbb{L}(X,Y)$, $R(A)$ and $N(A)$ shall denote respectively its range and kernel and $A^*$ its adjoint.
\subsection{Generalized Lipschitz conditions}\label{sec:lip}
Convergence results for the Gauss-Newton algorithm are typically obtained requiring Lipschitz continuity of the operator $F'$ \cite{DenSch96}. In \cite{Wan99,Wan00}, Wang introduced some weaker notions of Lipschitz continuity in the context of Newton's method. Recently, also convergence of the Gauss-Newton method has been proved under such generalized  Lipschitz conditions, see e.g. \cite{LiZhaJin04,ArgHil09,ArgHil10}. In this section we recall the definitions and review some basic properties of generalized Lipschitz continuous functions. 

First of all recall that a set $U \subset X$ is called \emph{star shaped} with respect to some of its points $x_* \in U$ if the segment $[x_*, x]$ is contained in $U$ for every $x \in U$.

\begin{definition}\label{def:radlip} Let  $f:\Omega\subseteq X\to Y$ and $U\subseteq\Omega$ be a starshaped set with respect to  $x_*\in U$. Fix $R\in(0,+\infty]$ such that $\sup_{x\in U}\nor{x-x_*}\leq R$ and let  $L:[0,R)\to\mathbb{R}$ be a positive and continuous function. The mapping $f$  is said to satisfy the {\em radius Lipschitz condition} of center $x_*$ with $L$ average on $U$ if 
 \beeq{eq:radlip}{\nor{f(x)-f(x_*+t(x-x_*))}\leq  \int_{t\nor{x-x_*}}^{\nor{x-x_*}} L(u) \ud u}
 for all $t\in[0,1]$ and $x\in U$.
 \end{definition}
 
Note that denoting by $\Gamma:[0,R)\to \mathbb{R}$ a primitive function of $L$, e.g. $\Gamma(u)=\int_0^uL(v)\ud v$, inequality \eqref{eq:radlip} can be written as
\[
\nor{f(x)-f(x_*+t(x-x_*))}\leq\Gamma(\nor{x-x_*})-\Gamma(t\nor{x-x_*}).\]
By definition $\Gamma$ is absolutely continuous and  differentiable, with $\Gamma'(u)=L(u)$. Since $L\geq 0$, $\Gamma$ is monotone increasing. Assuming $L$ to be increasing, we get that $\Gamma$ is convex. 

\begin{definition}\label{def:cenlip} Assume the hypotheses of Definition \ref{def:radlip}. We say that $f:\Omega\subseteq X\to Y$ satisfies the {\em center Lipschitz} condition  of center $x_*$ with $L$ average on $U$  if it verifies
\beeq{eq:cenlip}{
\nor{f(x)-f(x_*)}\leq\int_0^{\nor{x-x_*}} L(u) \ud u}
for every $x\in U$.
\end{definition}

The original definitions of Wang \cite{Wan99} do not require the continuity of the function $L$ but just an integrability assumption.  Our choice simplifies the proofs, but we remark that this requirement is not essential. Indeed, the proofs can be modified to handle also the more general case of Wang, at the cost of slight technical complications. In addition, the most well-known examples of Lipschitz averages are continuous. 

\begin{remark}
 If $f$ is Lipschitz continuous on a convex set $U$ with constant $L$ then it satisfies the radius and center  Lipschitz condition  of center $x_*$ with average constantly equal to $L$ on $U$, for every $x_*\in U$. Vice versa, in the definitions above, if we take $L$ constant  we obtain two intermediate concepts of Lipschitz continuity, called radius and center Lipschitz continuity, which are still in general weaker than the classical notion, being centered at a specific point. 
 
Note also that the center Lipschitz condition is weaker than the corresponding radius one. 
\end{remark}

Let $F:\Omega\subseteq X\to Y$ be a a Fr\'echet differentiable operator.  It is well-known, see e.g.  \cite{Pol83}, that if $F'$ is Lipschitz continuous with constant $L$ then the following inequality holds for every $x,x_*\in X$:
\beeq{eq:par}{
\nor{F(x_*)-F(x)-F'(x)(x_*-x)}\leq \frac L 2\nor{x_*-x}^2.}
We are going to show that under the weaker Lipschitz conditions introduced above it is still possible to prove two  estimates which are similar to \eqref{eq:par}. 
To this aim we need the following three propositions. In the first one we prove two key inequalities, and in the subsequent ones we rewrite them in a form more similar to \eqref{eq:par}.
The subsequent result is contained  implicitly in several recent papers providing local results about Gauss-Newton method, see e.g. \cite{LiWan02,LiZhaJin04}. 

\begin{proposition} Let  $F:\Omega\to Y$ be a G\^ateaux differentiable operator. Then:
\begin{itemize}
\item[(i)] if $F'$ satisfies the radius Lipschitz condition  of center $x_*$ with $L$ average on $U\subseteq\Omega$  (with $L$ and $U$ as in Definition \ref{def:radlip}), then for all $x\in U$ 
\beeq{eq:genparr}
{\nor{F(x_*)-F(x)-F'(x)(x_*-x)}\leq\int_0^{\nor{x-x_*}} L(u)u \ud u.}
\item[(ii)] if $F'$ satisfies the center Lipschitz condition   with $L$ average at $x_*$ on $U\subseteq\Omega$  (with $L$ and $U$ as in Definition \ref{def:cenlip}), then for all $x\in U$ 
\beeq{eq:genparc}
{\nor{F(x_*)-F(x)-F'(x)(x_*-x)}\leq\int_0^{\nor{x-x_*}} (2\nor{x-x_*}-u ) L(u) \ud u.}
\end{itemize}
\end{proposition}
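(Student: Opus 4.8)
The plan is to start from the integral form of the mean value theorem along the segment joining $x_*$ and $x$. Writing $r=\nor{x-x_*}$ and $g(s)=F(x_*+s(x-x_*))$ for $s\in[0,1]$, G\^ateaux differentiability gives $g'(s)=F'(x_*+s(x-x_*))(x-x_*)$, so that $F(x)-F(x_*)=\int_0^1 F'(x_*+s(x-x_*))(x-x_*)\ud s$. Using $F'(x)(x_*-x)=-F'(x)(x-x_*)$ and subtracting, one reaches the key identity
\[
F(x_*)-F(x)-F'(x)(x_*-x)=\int_0^1\big[F'(x)-F'(x_*+s(x-x_*))\big](x-x_*)\ud s.
\]
Taking norms and pulling out $\nor{x-x_*}=r$ reduces both claims to estimating $\int_0^1\nor{F'(x)-F'(x_*+s(x-x_*))}\ud s$ by the appropriate Lipschitz hypothesis, followed by an interchange of the order of integration.

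For part (i) I would apply the radius Lipschitz condition directly with $t=s$, bounding $\nor{F'(x)-F'(x_*+s(x-x_*))}$ by $\int_{sr}^{r}L(u)\ud u$. Multiplying by $r$ and integrating in $s$ gives the double integral $r\int_0^1\!\int_{sr}^r L(u)\ud u\,\ud s$; swapping the order over the triangle $\{(s,u):0\le s\le 1,\ sr\le u\le r\}$ yields $r\int_0^r (u/r)L(u)\ud u=\int_0^r uL(u)\ud u$, which is exactly \eqref{eq:genparr}.

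For part (ii) the center Lipschitz condition only controls deviations from $x_*$, so I would first insert $F'(x_*)$ by the triangle inequality:
\[
\nor{F'(x)-F'(x_*+s(x-x_*))}\le\nor{F'(x)-F'(x_*)}+\nor{F'(x_*)-F'(x_*+s(x-x_*))}.
\]
Since $U$ is starshaped with respect to $x_*$, each intermediate point $x_*+s(x-x_*)$ lies in $U$ at distance $sr$ from $x_*$, so the center condition bounds the two terms by $\int_0^r L(u)\ud u$ and $\int_0^{sr}L(u)\ud u$ respectively. Integrating $r$ times over $s\in[0,1]$ and swapping the order in the second term (over the complementary triangle $\{(s,u):0\le u\le r,\ u/r\le s\le 1\}$) produces $\int_0^r rL(u)\ud u+\int_0^r(r-u)L(u)\ud u=\int_0^r(2r-u)L(u)\ud u$, which is \eqref{eq:genparc}.

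The two Fubini interchanges are routine; the only genuinely delicate point is justifying the vector-valued fundamental theorem of calculus at the outset, since $F$ is merely G\^ateaux differentiable. I would handle this by scalarization: for fixed $v\in Y$ the map $s\mapsto\scal{F(x_*+s(x-x_*))}{v}$ is everywhere differentiable with derivative $\scal{F'(x_*+s(x-x_*))(x-x_*)}{v}$, and this derivative is bounded on $[0,1]$ because the Lipschitz average controls $\nor{F'}$ along the segment (both hypotheses give $\nor{F'(x_*+s(x-x_*))}\le C+\int_0^r L(u)\ud u$). Hence the scalar map is Lipschitz, the one-dimensional fundamental theorem applies, and letting $v$ range over $Y$ recovers the required integral representation of $F(x)-F(x_*)$.
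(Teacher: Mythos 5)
Your proposal is correct and follows essentially the same route as the paper: write $F(x)-F(x_*)$ via the fundamental theorem of calculus along the segment, subtract $F'(x)(x_*-x)$, bound the integrand by the radius (resp.\ center, after a triangle inequality through $F'(x_*)$) Lipschitz condition, and evaluate the resulting double integral. The only differences are cosmetic — you compute the iterated integral by Fubini where the paper uses a change of variables and integration by parts, and you are more careful than the paper in justifying the vector-valued fundamental theorem under mere G\^ateaux differentiability.
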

 
\begin{proof}
Let $x \in U$ and define $\phi: [0,1] \to Y$ by setting $\phi(t) = F(x_* + t(x - x_*))$. Clearly $\phi$ is differentiable and $\phi^\prime(t) = F^\prime(x_* + t(x - x_*))(x - x_*)$. Then
\begin{equation*}
F(x) - F(x_*) = \phi(1)- \phi(0) = \int_0^1 F^\prime(x_* +t(x - x_*))(x - x_*) \ud t.
\end{equation*}
Therefore
\begin{equation*}
F(x_*) - F(x) - F^\prime(x)(x_* - x) = \int_0^1 \left(F^\prime(x_* +t(x - x_*)) - F^\prime(x)\right)(x_* - x) \ud t
\end{equation*}
and hence
\[
\nor{F(x_*) \hspace{-1pt}- \hspace{-1pt}F(x) \hspace{-1pt}-\hspace{-1pt} F^\prime(x)(x_*\hspace{-1pt} - x)} \leq \hspace{-3pt}\int_0^1\hspace{-2pt}\nor{F^\prime(x_* +t(x - x_*))\hspace{-1pt} -\hspace{-1pt} F^\prime(x)} \nor{x - x_*} \ud t.\]
Let us first prove $(i)$. 
By \eqref{eq:radlip}, we get
 \beeq{eq:dislip}{
\int_0^1\nor{F^\prime(x_* +t(x - x_*)) - F^\prime(x)} \nor{x - x_*} \ud t\leq\hspace{-1pt} \nor{x - x_*}\hspace{-1pt}\int_0^1\hspace{-4pt}\int_{t \nor{x - x_*}}^{\nor{x - x_*}}\hspace{-2pt} L(u) \ud u  \ud t.}
Note that in general, setting $\Gamma(u) = \int_0^u L(v) \ud v$, it follows
\begin{align}
\nonumber\int_0^1 \left( \int_{t \rho}^\rho L(u) \ud u \right)\rho \ud t &= \int_0^1\left( \int_0^\rho L(u) \ud u - \int_0^{t \rho} L(u) \ud u\right)\rho \ud t \\[1ex]
\nonumber&= \rho\, \Gamma(\rho) - \int_0^1 \Gamma(t \rho) \rho \ud t \\[1ex]
\nonumber& = \left[ u\, \Gamma(u)\right]_0^\rho - \int_0^\rho\Gamma(u) \ud u \\[1ex]
\nonumber& = \int_0^\rho u\, \Gamma^\prime(u) \ud u \\[1ex]
\label{eq:int}& = \int_0^\rho L(u) u \ud u
\end{align}
where we used the change of variables $u=t\rho$ and an integration by parts. Writing the equality obtained  in \eqref{eq:int} for $\rho = \nor{x - x_*}$,   \eqref{eq:dislip} becomes
\begin{equation*}
\nor{F(x_*) - F(x) - F^\prime(x)(x_* - x)} \leq \int_0^{\nor{x - x_*}}L(u) u \ud u,
\end{equation*}
so that $(i)$ is proved. \\
To show that $(ii)$ holds, observe that the center Lipschitz condition \eqref{eq:cenlip} implies
\begin{align*}
\nor{F^\prime(x_* +t(x - x_*)) - F^\prime(x)} &\leq \nor{F^\prime(x_* + t(x - x_*)) - F^\prime(x_*)} 
+ \nor{F^\prime(x_*) - F^\prime(x)} \\[1ex]
& \leq \int_0^{t\nor{x - x_*}} L(u) \ud u + \int_0^{\nor{x - x_*}}L(u) \ud u
\end{align*}
Reasoning as in the previous case and using the same notations it follows
\begin{align*}
\nor{F(x_*) - F(x) - F^\prime(x)[x_* - x]} &\leq \int_0^1 \Gamma(t \rho) \rho \ud t + \Gamma(\rho) \rho\\[1ex]
& = \int_0^\rho \Gamma(u) \ud u + \Gamma(\rho) \rho \\[1ex]
& = \int_0^{\rho} \Gamma(u) \ud u + \big[ (2 \rho - u)\Gamma(u)\big]_0^{\rho}\\[1ex]
& = \int_0^\rho (2 \rho - u) \Gamma^\prime(u) \ud u \\[1ex]
& = \int_0^{\nor{x - x_*}} (2 \nor{x - x_*} - u) L(u)\ud u.
\end{align*}

\end{proof}
The following Propositions are a direct consequence of Lemma 2.2 in \cite{LiWan03}, and have already been stated in a slightly different form in \cite{LiZhaJin04}. We include the proofs for the sake of completeness.

\begin{proposition}\label{prop:gam} Given $L:[0,R)\to\mathbb{R}$  a continuous, positive and increasing function, the function $\gamma_\lambda$ defined by setting 
\beeq{eq:gamma}
{\gamma_\lambda: [0,R)\to\mathbb{R},\qquad \gamma_\lambda(r)=\begin{cases} \dfrac{1}{r^{1+\lambda}}\displaystyle\int_0^r u^{\lambda} L(u)\ud u& \text{if }r\in(0,R)\\  &\\\dfrac{L(0)}{1+\lambda} & \text{if }r=0,  \end{cases}}
is well-defined, continuous, positive and increasing for all $\lambda\geq 0$. Moreover $\gamma_\lambda$ is constant and equal to $L/(1+\lambda)$ if $L$ is constant, 
and it is strictly increasing if $L$ is strictly increasing. Finally the following inequality holds for every $r\in [0,R)$
\beeq{eq:disgam}{
(1+\lambda)\gamma_\lambda(r)\leq L(r).}
\end{proposition}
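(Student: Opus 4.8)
The plan is to verify each claimed property directly from the definition \eqref{eq:gamma}, handling the interior $(0,R)$ and the endpoint $r=0$ separately. On $(0,R)$ the numerator $\int_0^r u^\lambda L(u)\ud u$ is a finite positive number, since $L$ is continuous and positive while $u^\lambda$ is integrable near $0$ for $\lambda\geq 0$; dividing by the positive quantity $r^{1+\lambda}$ therefore gives a well-defined positive value, and at $r=0$ the value $L(0)/(1+\lambda)$ is positive by hypothesis. Continuity on $(0,R)$ is immediate because the numerator is differentiable in $r$ by the fundamental theorem of calculus and the denominator is smooth and nonvanishing. For continuity at the origin I would compute $\lim_{r\to 0^+}\gamma_\lambda(r)$ by l'H\^opital's rule applied to the ratio $\int_0^r u^\lambda L(u)\ud u\,/\,r^{1+\lambda}$, whose derivatives give $r^\lambda L(r)/\big((1+\lambda)r^\lambda\big)=L(r)/(1+\lambda)$; letting $r\to 0^+$ and using continuity of $L$ recovers exactly the prescribed value $\gamma_\lambda(0)=L(0)/(1+\lambda)$.

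The crux of the argument is monotonicity together with the inequality \eqref{eq:disgam}, and both follow from a single estimate. Writing $I(r)=\int_0^r u^\lambda L(u)\ud u$ and differentiating on $(0,R)$ yields
\[
\gamma_\lambda'(r)=r^{-(2+\lambda)}\Big[\,r^{1+\lambda}L(r)-(1+\lambda)I(r)\,\Big].
\]
Since $L$ is increasing, $L(u)\leq L(r)$ for every $u\in[0,r]$, whence
\[
(1+\lambda)\,I(r)\leq (1+\lambda)L(r)\int_0^r u^\lambda\ud u=L(r)\,r^{1+\lambda}.
\]
This one inequality does double duty: it makes the bracket above nonnegative, so $\gamma_\lambda'\geq 0$ and $\gamma_\lambda$ is increasing on $(0,R)$ (hence on all of $[0,R)$ by the continuity just established), and after dividing by $r^{1+\lambda}$ it is precisely \eqref{eq:disgam} on $(0,R)$; the endpoint case of \eqref{eq:disgam} is the equality $(1+\lambda)\gamma_\lambda(0)=L(0)$.

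Finally I would treat the two refinements. If $L$ is constant, say $L\equiv c$, then $I(r)=c\,r^{1+\lambda}/(1+\lambda)$ and $\gamma_\lambda(r)\equiv c/(1+\lambda)=L/(1+\lambda)$, matching the value at $r=0$. If $L$ is strictly increasing, then $L(u)<L(r)$ on the positive-measure set $(0,r)$, so the displayed estimate becomes strict; this forces $\gamma_\lambda'(r)>0$ on $(0,R)$, hence strict monotonicity there, which continuity at $0$ then upgrades to strict monotonicity on $[0,R)$. I do not expect any genuine difficulty in this proof: the only point requiring a little care is the passage across $r=0$, where one must combine the limit computation with the interior monotonicity to conclude that the claimed properties persist at the endpoint.
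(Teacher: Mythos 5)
Your proof is correct and follows essentially the same route as the paper's: the same key estimate $\int_0^r u^\lambda L(u)\ud u\leq L(r)r^{1+\lambda}/(1+\lambda)$ derived from the monotonicity of $L$, used simultaneously to establish \eqref{eq:disgam} and the sign of $\gamma_\lambda'$, with l'H\^opital's rule handling continuity at the origin. The only cosmetic difference is that you compute $\gamma_\lambda'$ by the quotient rule where the paper differentiates the identity $r^{1+\lambda}\gamma_\lambda(r)=\int_0^r u^\lambda L(u)\ud u$ implicitly; the two computations are identical in substance.
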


\begin{proof} 
Clearly $\gamma_\lambda$ is differentiable on $(0,R)$ since it is a product of differentiable functions and by definition
\begin{equation}\label{eq:prod}
r^{1 + \lambda} \gamma_\lambda(r) = \int_0^r u^\lambda L(u)\ud u.
\end{equation}
Differentiating both members of \eqref{eq:prod}, it follows
\begin{equation*}
(1 + \lambda)r^\lambda \gamma_\lambda(r) + r^{1 + \lambda} \gamma_\lambda^\prime(r) = r^\lambda L(r),
\end{equation*}
therefore
\begin{equation}\label{eq:dergam}
r \gamma_\lambda^\prime(r) = L(r) - (1 + \lambda) \gamma_\lambda(r),
\end{equation}
Thus, if we prove \eqref{eq:disgam} we also get that $\gamma_\lambda(r)$ is increasing.
To this aim, taking into account that $L$ is increasing, we have
\begin{equation}\label{eq:intL}
r^{1 + \lambda} \gamma_\lambda(r) = \int_0^r u^\lambda L(u)\ud u \leq \int_0^r u^\lambda L(r)\ud u = \frac{r^{1 + \lambda}}{1 + \lambda}L(r)
\end{equation}
from which \eqref{eq:disgam} follows. Note that if $L$ is strictly increasing the inequality in \eqref{eq:intL} is strict, therefore in this case, recalling \eqref{eq:dergam},   $\gamma_\lambda^\prime(r)>0$ on $(0,R)$. On the other hand, if $L$ is constant the inequality in \eqref{eq:intL} is indeed an equality and $\gamma_\lambda^\prime(r)=0$ by \eqref{eq:dergam} implying that $\gamma_\lambda$ is constant on $(0,R)$.
The continuity of $\gamma_\lambda$ at 0 follows by L'Hospital's rule. In fact, using that $L$ is continuous at 0:
\begin{equation*}
\lim_{r \to 0} \gamma_\lambda(r) = \lim_{r \to 0} \frac{\int_0^r  u^\lambda L(u) \ud u}{r^{1 + \lambda}} = \lim_{r \to 0} \frac{r^\lambda L(r)}{(1 + \lambda) r^\lambda} = \frac{L(0)}{1 + \lambda}.
\end{equation*}

\end{proof}
Using the function $\gamma_0$ introduced in  Proposition \ref{prop:gam} the center Lipschitz condition with $L$ average can be written in the following form, resembling the classical definition of Lipschitz continuity
\[
{\nor{f(x)-f(x_*)}\leq \gamma_0(\nor{x-x_*})\nor{x-x_*}.}
\]

\begin{proposition} Under the assumptions of Proposition \ref{prop:gam}, the function 
\beeq{eq:gamc}{ \gamma^c:[0,R)\to\mathbb{R}, \qquad \gamma^c(r)=\begin{cases} \dfrac{1}{r^{2}}\displaystyle\int_0^r (2r-u) L(u) \ud u& \text{if }r\in(0,R)\\&\\ \dfrac{3L(0)}{2} & \text{if }r=0,\end{cases}  }
is well-defined, continuous, positive and increasing.
\end{proposition}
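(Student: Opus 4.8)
The plan is to follow the same pattern used for $\gamma_\lambda$ in Proposition \ref{prop:gam}, since the four properties to verify are identical in nature. Well-definedness is immediate: for $r\in(0,R)$ the integrand $(2r-u)L(u)$ is continuous on the compact interval $[0,r]\subset[0,R)$, so the integral is finite, while the denominator $r^2$ is nonzero; continuity on $(0,R)$ then follows since $\gamma^c$ is the quotient of a differentiable function and $r^2$. Positivity is equally easy: on $[0,r]$ we have $2r-u\geq r>0$ and $L>0$, so the integral is strictly positive, whereas at $r=0$ the value $3L(0)/2$ is positive because $L(0)>0$.

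The observation that streamlines everything is the identity, valid for $r\in(0,R)$,
\[
\gamma^c(r)=\frac{1}{r^2}\left(2r\int_0^r L(u)\ud u-\int_0^r u L(u)\ud u\right)=2\gamma_0(r)-\gamma_1(r),
\]
obtained by splitting $2r-u$ and recognizing the two integrals as $r^{1+\lambda}\gamma_\lambda(r)$ for $\lambda=0,1$ (cf. \eqref{eq:gamma}). From this, continuity at $0$ comes for free from the continuity of $\gamma_0$ and $\gamma_1$ established in Proposition \ref{prop:gam}, and indeed $2\gamma_0(0)-\gamma_1(0)=2L(0)-L(0)/2=3L(0)/2$ matches the prescribed value. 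Alternatively, one can compute $\lim_{r\to 0}\gamma^c(r)$ directly by two applications of L'Hospital's rule.

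The genuinely delicate point is monotonicity, and I expect it to be the main obstacle: $\gamma^c=2\gamma_0-\gamma_1$ is a difference of increasing functions, and such a difference need not be increasing, so one cannot simply invoke Proposition \ref{prop:gam}. Here I would differentiate. Using \eqref{eq:dergam}, namely $r\gamma_\lambda'(r)=L(r)-(1+\lambda)\gamma_\lambda(r)$, for $\lambda=0$ and $\lambda=1$ yields
\[
r(\gamma^c)'(r)=2\,r\gamma_0'(r)-r\gamma_1'(r)=2\bigl(L(r)-\gamma_0(r)\bigr)-\bigl(L(r)-2\gamma_1(r)\bigr)=L(r)-2\gamma_0(r)+2\gamma_1(r).
\]

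It then remains to show the right-hand side is nonnegative, and this is where the monotonicity of $L$ must be used quantitatively rather than merely qualitatively. Writing $2\gamma_0(r)-2\gamma_1(r)=\frac{2}{r^2}\int_0^r(r-u)L(u)\ud u$ and using that $L$ is increasing, hence $L(u)\leq L(r)$ on $[0,r]$ against the nonnegative weight $(r-u)$, I bound
\[
2\gamma_0(r)-2\gamma_1(r)\leq \frac{2}{r^2}L(r)\int_0^r(r-u)\ud u=\frac{2}{r^2}L(r)\cdot\frac{r^2}{2}=L(r).
\]
Consequently $r(\gamma^c)'(r)=L(r)-\bigl(2\gamma_0(r)-2\gamma_1(r)\bigr)\geq 0$, so $(\gamma^c)'\geq 0$ on $(0,R)$ and $\gamma^c$ is increasing there; continuity at $0$ then extends monotonicity to all of $[0,R)$, completing the proof.
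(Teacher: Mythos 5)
Your proof is correct and follows essentially the same route as the paper: both arguments reduce monotonicity to the inequality $2\gamma_0(r)-2\gamma_1(r)\leq L(r)$ (equivalently $2\gamma^c(r)\leq 2\gamma_0(r)+L(r)$) and establish it by bounding $\int_0^r (r-u)L(u)\ud u$ by $L(r)r^2/2$ using the monotonicity of $L$, and both obtain continuity at $0$ from the decomposition $\gamma^c=2\gamma_0-\gamma_1$. The only cosmetic difference is that you derive the derivative identity from \eqref{eq:dergam} applied to $\gamma_0$ and $\gamma_1$, whereas the paper differentiates $r^2\gamma^c(r)$ directly.
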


\begin{proof}
The definition of $\gamma^c$ immediately implies
 \begin{equation}\label{eq:prg}
r^2 \gamma^c(r) = \int_0^r (2 r - u)L(u) \ud u =  2 r \int_0^r L(u) \ud u - \int_0^r u L(u) \ud u
\end{equation}
and differentiating both members of \eqref{eq:prg}
\begin{equation*}
2 r \gamma^c(r) + r^2 (\gamma^c)^\prime(r) = 2 \int_0^r L(u) \ud u +  r L(r)
\end{equation*}
Dividing by $r$, and using the notations of Proposition \ref{prop:gam}, we obtain
\begin{align*}
r (\gamma^c)^\prime(r) = 2( \gamma_0(r) -  \gamma^c(r)) + L(r).
\end{align*}
Therefore, in order to prove that $\gamma^c$ is increasing we just need to show that
\beeq{eq:newdis}
{2\gamma^c(r)\leq 2\gamma_0(r)+L(r).}
In fact, using the definitions  of the functions $\gamma^c$ and $\gamma_0$ and the monotonicity of $L$  we have:
\begin{align*}
2r^2\gamma^c(r)&=2\int_0^r r L(u)\ud u+2\int_0^r (r-u)L(u)\ud u\\
& \leq 2r^2\gamma_0(r) +2 L(r)\int_0^r (r-u)\ud u\\
&=2r^2\gamma_0(r)+r^2L(r),
\end{align*}
that clearly implies \eqref{eq:newdis}. The continuity of $\gamma^c$ at $0$ can be deduced as follows
\[
\lim_{r\to 0}\gamma^c(r)=\lim_{r\to 0} 2\gamma_0(r)-\gamma_1(r)=2L(0)-\frac{L(0)}{2}=\frac 3 2 L(0).
\]  
relying on the continuity of $\gamma_0$ and $\gamma_1$ proved in Proposition \ref{prop:gam}. 

\end{proof}

\begin{remark}\label{rem:gam}
Using the functions $\gamma_0, \gamma_1$ and $\gamma^c$ introduced in \eqref{eq:gamma} and in \eqref{eq:gamc}, the 
inequality  \eqref{eq:cenlip} written for $F'$ becomes 
\begin{align*}
\nor{F'(x)-F'(x_*)}\leq \gamma_0(\nor{x-x_*})\nor{x-x_*}.
\end{align*}
The inequalities \eqref{eq:genparr} and \eqref{eq:genparc} can be written respectively as
\begin{align}
\nor{F(x_*) - F(x) - F^\prime(x)(x_* - x)} &\leq  \gamma_1(\nor{x - x_*})\nor{x - x_*}^2
\end{align}
and
\begin{align}
\nor{F(x_*) - F(x) - F^\prime(x)(x_* - x)} &\leq  \gamma^c(\nor{x - x_*}) \nor{x - x_*}^2,
\end{align}
that generalize the inequality \eqref{eq:par}.

Note moreover that the functions $r\gamma_0(r),r^2\gamma_1(r)$ and $r^2\gamma^c(r)$ are always strictly increasing and if $L$ is a constant function equal to $L$, then 
\[
\gamma_0(r)=L,\qquad \gamma_1(r)=\frac L 2,\qquad \gamma^c(r)=\frac 3 2 L.
\]
\end{remark}
\begin{remark}
 It is worth noting that, even though only a G\^ateaux differentiability has been required, the previous inequalities together with the hypothesis on the function $L$ implies Fr\'echet differentiability at $x_*$. 
\end{remark}

\subsection{Generalized inverses} 

In this section we collect some well-known results regarding the {\em Moore-Penrose generalized inverse} (also known as  {\em pseudoinverse}) $A^\dag$ of a linear operator $A$. They will be useful in the rest of the paper. For the definition and a comprehensive analysis of the properties of the Moore-Penrose inverse we refer the reader to \cite{Gro77}. 

Assume that  $A\in \mathbb{L}(X,Y)$ has a  closed range. The pseudoinverse of $A$  is the linear operator $A^\dag\in\mathbb{L}(Y,X)$ defined by means of the four ``Moore-Penrose equations''

\begin{equation}
\label{lem:gids1} AA^\dag A=A, \quad A^\dag AA^\dag =A^\dag, \quad (AA^\dag)^*=AA^\dag,\quad (A^\dag A)^*=A^\dag A.\\
\end{equation}
Denoting by $P_{N(A)}$ and $P_{{R(A)}}$ the orthogonal projectors onto the kernel and the  range of $A$ respectively, from the definition it is clear that
\begin{equation}
\label{lem:gids2}A^\dag A=I-P_{N(A)}, \qquad AA^\dag=P_{{R(A)}}.
\end{equation}  
In case $A$ is injective, $N(A)=\{0\}$ and $A^\dag A=I$, that is $A^\dag$ is a left inverse of $A$.
Furthermore, for each $A \in\mathbb{L}(X,Y)$ the following statements are equivalent:
\begin{itemize}
\item $A$ is injective and the range of $A$ is closed;
\item  $A^*A$ is invertible in $\mathbb{L}(X,X)$. 
\end{itemize}
and if one of those equivalent conditions is true then $A^\dag=(A^*A)^{-1}A^*$ and $\nor{A^\dag}^2=\nor{(A^*A)^{-1}}$.

The following lemma gives a perturbation bound for the Moore-Penrose pseudoinverse, see \cite{Ste69,Wed73}.
\begin{lemma}\label{lem:daga} Let $A,B \in \mathbb{L}(X,Y)$ with $A$ injective and $R(A)$ closed. If $\nor{(B-A)A^\dag}<1$, then $B$ is injective, $R(B)$ is closed and 

\begin{equation*}
\|B^\dag\| \leq \frac{\|A^\dag\|}{1 - \nor{(B-A)A^\dag}}.
\end{equation*}
Moreover
\begin{equation*}
\|B^\dag - A^\dag\| \leq \sqrt{2} \|A^\dag\| \|B^\dag\| \nor{B - A},
\end{equation*}
and therefore
\begin{equation*}
\|B^\dag - A^\dag\| \leq \sqrt{2} \frac{\|A^\dag\|^2 \nor{B-A}}{1 - \nor{A^\dag} \nor{B-A}}.
\end{equation*}
\end{lemma}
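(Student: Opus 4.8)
The plan is to prove the three assertions in turn, writing $E:=B-A$ throughout, so that the hypothesis reads $\nor{EA^\dag}<1$.

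\textbf{First assertion.} I would exploit that $A$ is injective, so that $A^\dag A=I$ and hence $B=A+E=(I+EA^\dag)A$. Since $\nor{EA^\dag}<1$, the operator $I+EA^\dag\in\mathbb{L}(Y,Y)$ is invertible by a Neumann series argument, with $\nor{(I+EA^\dag)^{-1}}\le 1/(1-\nor{EA^\dag})$; being the composition of the injective closed-range operator $A$ with a linear homeomorphism, $B$ is then itself injective with closed range, which is exactly what is needed for $B^\dag=(B^*B)^{-1}B^*$ to exist. To bound $\nor{B^\dag}$ I would use that for an injective closed-range operator $\nor{B^\dag}=1/\inf_{\nor{x}=1}\nor{Bx}$ (consistent with the recalled fact $\nor{B^\dag}^2=\nor{(B^*B)^{-1}}$). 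From $\nor{x}=\nor{A^\dag Ax}\le\nor{A^\dag}\nor{Ax}$ one gets $\nor{Ax}\ge\nor{x}/\nor{A^\dag}$, and since $\nor{(I+EA^\dag)Ax}\ge(1-\nor{EA^\dag})\nor{Ax}$, this yields $\nor{Bx}\ge(1-\nor{EA^\dag})\nor{x}/\nor{A^\dag}$ and hence the claimed bound on $\nor{B^\dag}$.

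\textbf{Second assertion.} I would start from the algebraic perturbation identity
\[ B^\dag-A^\dag=-B^\dag E A^\dag+B^\dag(I-AA^\dag)-(I-B^\dag B)A^\dag, \]
which is verified by expanding the right-hand side with the Moore--Penrose relations. Since both $A$ and $B$ are injective, $A^\dag A=B^\dag B=I$, so the last term vanishes and only $T_1:=-B^\dag E A^\dag$ and $T_2:=B^\dag(I-AA^\dag)$ survive. The structural point is that these have orthogonal domains relative to $Y=R(A)\oplus R(A)^\perp$: from $A^\dag=A^\dag AA^\dag=A^\dag P_{R(A)}$ we get $T_1=T_1P_{R(A)}$, whereas $T_2=B^\dag(I-P_{R(A)})$ vanishes on $R(A)$. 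Decomposing $y=P_{R(A)}y+(I-P_{R(A)})y$ and combining the triangle inequality with Cauchy--Schwarz and the Pythagorean identity then gives $\nor{B^\dag-A^\dag}\le\sqrt{\nor{T_1}^2+\nor{T_2}^2}$.

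It remains to estimate the two norms. Immediately $\nor{T_1}\le\nor{B^\dag}\nor{A^\dag}\nor{E}$. For $T_2$, restricting to $w\in R(A)^\perp=N(A^*)$ and writing $B^\dag w=(B^*B)^{-1}B^*w=(B^*B)^{-1}E^*w$ (as $A^*w=0$) gives $\nor{T_2}\le\nor{(B^*B)^{-1}}\nor{E}=\nor{B^\dag}^2\nor{E}$. Running the same computation from the symmetric identity $B^\dag-A^\dag=-A^\dag E B^\dag-A^\dag(I-BB^\dag)$ produces instead $\nor{A^\dag}^2\nor{E}$ for the corresponding projection term. Taking in each case the smaller estimate and using $\min\{\nor{A^\dag}^2,\nor{B^\dag}^2\}\le\nor{A^\dag}\nor{B^\dag}$, I would obtain $\nor{B^\dag-A^\dag}^2\le 2\nor{A^\dag}^2\nor{B^\dag}^2\nor{E}^2$, i.e. the second assertion. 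The third inequality then follows by substituting $\nor{B^\dag}\le\nor{A^\dag}/(1-\nor{EA^\dag})$ from the first part and bounding $\nor{EA^\dag}\le\nor{A^\dag}\nor{E}$ (meaningful in the regime $\nor{A^\dag}\nor{E}<1$, where the denominator stays positive).

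The main obstacle is the clean constant $\sqrt2$ together with the symmetric factor $\nor{A^\dag}\nor{B^\dag}$. A naive triangle-inequality estimate of the two surviving terms gives something like $\nor{A^\dag}\nor{B^\dag}+\nor{B^\dag}^2$, which is both larger and not symmetric in $\nor{A^\dag},\nor{B^\dag}$; the projection term $T_2$ genuinely carries a factor $\nor{B^\dag}^2$ and cannot be charged to $\nor{A^\dag}$ directly. Obtaining the stated bound therefore requires two ingredients: the orthogonal-domain decomposition, which upgrades the sum of norms to the square root of the sum of squares, and the symmetric bookkeeping that lets one always assign the projection term to the smaller of the two pseudoinverse norms.
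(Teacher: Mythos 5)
Your proof is correct. Note that the paper itself does not prove this lemma: it is quoted from the literature with a pointer to Stewart and Wedin, so there is no in-paper argument to compare against. What you have reconstructed is essentially the classical Wedin argument: the factorization $B=(I+EA^\dag)A$ with a Neumann-series bound for the first assertion, and the three-term perturbation identity $B^\dag-A^\dag=-B^\dag EA^\dag+B^\dag(I-AA^\dag)-(I-B^\dag B)A^\dag$ (which expands correctly, and whose last term vanishes by injectivity) for the second. The two key refinements you identify are exactly the ones needed to reach the constant $\sqrt2$: the two surviving terms are supported on the orthogonal complements $R(A)$ and $R(A)^\perp$, which upgrades the triangle inequality to $\sqrt{\nor{T_1}^2+\nor{T_2}^2}$, and the symmetric identity obtained by swapping $A$ and $B$ lets you replace the projection term's $\nor{B^\dag}^2$ by $\min\{\nor{A^\dag}^2,\nor{B^\dag}^2\}\leq\nor{A^\dag}\nor{B^\dag}$. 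Your closing caveat is also well taken: the third displayed inequality is only meaningful when $\nor{A^\dag}\nor{B-A}<1$, which is strictly stronger than the stated hypothesis $\nor{(B-A)A^\dag}<1$; this is a (harmless, since the lemma is later applied under the stronger condition) imprecision in the statement rather than in your argument.
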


\subsection{The proximity operator }\label{sec:prox} 

This section consists of an introduction on proximity operators, which were first introduced by Moreau in \cite{Mor62}, and further investigated in \cite{Mor63,Mor65} as a generalization of the notion of convex projection operator. 
Let $H:X\to X$ be a continuous, positive and selfadjoint operator, bounded from below, and therefore invertible. Then we can define a new scalar product on $X$ by setting $\langle x,z\rangle_H=\langle x, Hz\rangle $. The corresponding induced norm $\nor{\cdot}_H$ is equivalent to the given norm on $X$, since the following inequalities hold true
\beeq{eq:norm_eq}
{\frac{1}{\nor{H^{-1}}}\nor{x}^2\leq\nor{x}_H^2\leq \nor{H}\nor{x}^2.}

The \emph{Moreau-Yosida approximation} of a convex and lower semicontinuous function $\varphi:X\to\rone\cup\{+\infty\} $ with respect to the scalar product induced by $H$ is the function $M_{\varphi}:X\to\rone$ defined by setting
\beeq{eq:MY}
{M_{\varphi}(z)=\inf_{x\in X}\left\{\varphi(x)+\frac{1}{2}\nor{x-z}_H^2 \right\}.}

For every $z\in X$, the infimum in equation \eqref{eq:MY} is attained at a unique point, denoted $\prox^{H}_\varphi(z)$. In this way, an operator 
\begin{equation*}
\prox^{H}_\varphi : X \to X
\end{equation*}
is defined, which is called the \emph{proximity operator} associated to $\varphi$ w.r.t.  $H$. In case $H=I$ is the identity, the proximity operator is denoted simply by $\prox_\varphi$. Writing the first order optimality conditions for \eqref{eq:MY}, we get
\beeq{eq:prox_first}
{p=\prox_\varphi^{H}(z) \Longleftrightarrow  0\in \partial \varphi(p)+H(p-z) \Longleftrightarrow Hz\in (\partial \varphi+H)(p),}
which gives
\[
\prox_{\varphi}^{H}(z) = (H+\partial \varphi)^{-1}(Hz).\]

We remark that the  map $(H+\partial \varphi)^{-1}$ (in principle multi-valued) is single-valued, since we know that the minimum is attained at a unique point.

\begin{lemma} \label{lem:prox_lip} The proximity operator $\prox_\varphi^{H}:X\to X$ is Lipschitz with  constant $\sqrt{\nor{H}\nor{H^{-1}}}$ with respect to $\nor{\cdot}$, namely
\beeq{eq:non_exp}
{\nor{\prox_\varphi^{H}(z_1)-\prox_\varphi^{H}(z_2)}\leq \sqrt{\nor{H}\nor{H^{-1}}}\nor{z_1-z_2}.}
 \end{lemma}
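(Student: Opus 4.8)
The plan is to exploit the fact that, although the Lipschitz constant is measured in the ambient norm $\nor{\cdot}$, the proximity operator is most naturally nonexpansive with respect to the norm $\nor{\cdot}_H$ induced by $H$. So I would split the argument into two steps: first establish nonexpansivity of $\prox_\varphi^{H}$ in the $H$-norm, and then transfer the estimate to $\nor{\cdot}$ by means of the norm equivalence \eqref{eq:norm_eq}.

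For the first step, fix $z_1,z_2\in X$ and set $p_i=\prox_\varphi^{H}(z_i)$. The optimality characterization \eqref{eq:prox_first} yields $H(z_i-p_i)\in\partial\varphi(p_i)$ for $i=1,2$, where $\partial\varphi$ is the subdifferential taken with respect to the original scalar product. Since $\varphi$ is convex and lower semicontinuous, $\partial\varphi$ is a monotone operator, hence
\[
\langle H(z_1-p_1)-H(z_2-p_2),\,p_1-p_2\rangle\geq 0.
\]
Because $H$ is selfadjoint, $\langle Hx,y\rangle=\langle x,y\rangle_H$, so the displayed inequality rewrites as
\[
\langle (z_1-z_2)-(p_1-p_2),\,p_1-p_2\rangle_H\geq 0,
\]
that is, $\nor{p_1-p_2}_H^2\leq\langle z_1-z_2,p_1-p_2\rangle_H$. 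Applying the Cauchy--Schwarz inequality for $\langle\cdot,\cdot\rangle_H$ and dividing by $\nor{p_1-p_2}_H$ (the case $p_1=p_2$ being trivial) gives the nonexpansivity $\nor{p_1-p_2}_H\leq\nor{z_1-z_2}_H$.

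For the second step, I would simply chain the two inequalities in \eqref{eq:norm_eq}: the left one gives $\nor{p_1-p_2}^2\leq\nor{H^{-1}}\,\nor{p_1-p_2}_H^2$, while the right one gives $\nor{z_1-z_2}_H^2\leq\nor{H}\,\nor{z_1-z_2}^2$. Combining these with the nonexpansivity just proved yields $\nor{p_1-p_2}^2\leq\nor{H^{-1}}\nor{H}\,\nor{z_1-z_2}^2$, and taking square roots produces exactly \eqref{eq:non_exp}.

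The only delicate point is the bookkeeping between the two scalar products in the first step: the subdifferential --- and hence its monotonicity --- lives in the original inner product, whereas nonexpansivity is cleanest in the $H$-inner product, so the selfadjointness of $H$ must be invoked precisely to pass from one pairing to the other. Everything else is a routine application of Cauchy--Schwarz and the norm equivalence.
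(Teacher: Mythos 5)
Your proof is correct and follows essentially the same route as the paper: establish (firm) nonexpansivity of $\prox_\varphi^{H}$ in the $H$-norm and then transfer to $\nor{\cdot}$ via the norm equivalence \eqref{eq:norm_eq}. The only difference is that the paper cites Lemma 2.4 of \cite{ComWaj05} for the nonexpansivity step, whereas you derive it directly from the monotonicity of $\partial\varphi$ and the selfadjointness of $H$, which is a valid and self-contained substitute.
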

\begin{proof}
Being the proximity operator firmly nonexpansive with respect to the scalar product induced by $H$ (see e.g. Lemma 2.4 in \cite{ComWaj05}) we have 
\[
\nor{\prox_\varphi^{H}(z_1)-\prox_\varphi^{H}(z_2)}_H\leq\nor{z_1-z_2}_H.
\]
Using the inequalities in \eqref{eq:norm_eq} relating $\nor{\cdot}$ and $\nor{\cdot}_H$ we get the desired result. 
\end{proof}

It is also possible to show that in some cases the computation of the proximity operator with respect to the scalar product induced by $H$ can be brought back to the computation of the proximity operator with respect to the original norm. In particular, the following proposition holds.

\begin{proposition}
Let $A \in\mathbb{L}(X,Y)$. Let us suppose $A$ to be injective with closed range. Set $H = A^*A$ and assume $\varphi: X \to \mathbb{R}\cup\{+\infty\}$ a proper, convex and lower semicontinuous functional. Then
\begin{equation*}
\mathrm{prox}_\varphi^{H} = A^\dag \mathrm{prox}_{\varphi\circ A^\dag} A
\end{equation*}
\end{proposition}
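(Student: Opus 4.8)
The plan is to reduce both sides to minimization problems and connect them through the linear change of variable $y = Ax$. Since $H = A^*A$, the weighted norm factors as $\nor{x-z}_H^2 = \scal{x-z}{A^*A(x-z)} = \nor{A(x-z)}^2$, so by definition of the proximity operator
\[
\prox_\varphi^{H}(z) = \argmin_{x\in X}\Big\{\varphi(x) + \tfrac12\nor{Ax - Az}^2\Big\},
\]
while, recalling that $\prox_{\varphi\circ A^\dag}$ is the ordinary proximity operator on $Y$,
\[
A^\dag\,\prox_{\varphi\circ A^\dag}(Az) = A^\dag\,\argmin_{y\in Y}\Big\{\varphi(A^\dag y) + \tfrac12\nor{y - Az}^2\Big\}.
\]
Here $\varphi\circ A^\dag$ is convex and lower semicontinuous because $A^\dag$ is bounded and linear, and it is proper since for any $x_0$ with $\varphi(x_0)<+\infty$ the point $y_0=Ax_0$ gives $(\varphi\circ A^\dag)(y_0)=\varphi(A^\dag A x_0)=\varphi(x_0)<+\infty$; hence the right-hand minimizer exists and is unique by strict convexity of the quadratic term.

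The key point — and the main obstacle — is that the left-hand problem effectively ranges over $R(A)$, the image of the injective map $A$, whereas the right-hand problem is unconstrained over all of $Y$; I must show the unconstrained $Y$-minimizer already lies in $R(A)$. To this end I would use the orthogonal decomposition $y = y_1 + y_2$ with $y_1 = P_{R(A)}y$ and $y_2 = (I-P_{R(A)})y$. From the Moore--Penrose identities $AA^\dag = P_{R(A)}$ and $A^\dag A A^\dag = A^\dag$ one gets $A^\dag(I - P_{R(A)}) = 0$, hence $A^\dag y = A^\dag y_1$ and $\varphi(A^\dag y) = \varphi(A^\dag y_1)$. Moreover $Az \in R(A)$, so $y_1 - Az \perp y_2$ and $\nor{y - Az}^2 = \nor{y_1 - Az}^2 + \nor{y_2}^2$. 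Substituting, the objective becomes $\varphi(A^\dag y_1) + \tfrac12\nor{y_1 - Az}^2 + \tfrac12\nor{y_2}^2$, which is minimized only when $y_2 = 0$; thus the minimizer $q$ satisfies $q \in R(A)$.

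Finally I would transport $q$ back through $A^\dag$. Since $A$ is injective, $A^\dag A = I$, and $x\mapsto Ax$ is a bijection of $X$ onto $R(A)$ whose inverse is the restriction of $A^\dag$; for $q\in R(A)$ one has $A A^\dag q = P_{R(A)} q = q$. Consequently $x^\star := A^\dag q$ satisfies $Ax^\star = q$ and attains the value $\varphi(A^\dag q) + \tfrac12\nor{q - Az}^2$, which is the common minimum of the two problems, so $x^\star$ is the unique minimizer defining $\prox_\varphi^{H}(z)$. This yields $\prox_\varphi^{H}(z) = A^\dag q = A^\dag\,\prox_{\varphi\circ A^\dag}(Az)$ for every $z$, i.e. the claimed operator identity. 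An alternative route through the optimality condition $0\in\partial\varphi(p)+A^*A(p-z)$ combined with a subdifferential chain rule for $\varphi\circ A^\dag$ is possible, but it requires a qualification argument for the chain rule that the variational approach above avoids.
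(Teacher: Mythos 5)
Your argument is correct and follows essentially the same route as the paper's: both reduce the two proximity operators to their defining minimization problems via $\nor{x-z}_H^2=\nor{A(x-z)}^2$, use the orthogonal decomposition of $y\in Y$ along $R(A)$ and $R(A)^\perp$ together with the Moore--Penrose identities $AA^\dag=P_{R(A)}$ and $A^\dag A=I$ to see that the $Y$-problem is effectively posed on $R(A)$, and then transport the minimizer back with $A^\dag$. The only cosmetic difference is that the paper runs a single chain of inequalities showing $A\,\prox_\varphi^H(z)$ minimizes the $Y$-problem, whereas you first localize the $Y$-minimizer to $R(A)$ and then invoke the bijection $A:X\to R(A)$; the content is the same.
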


\begin{proof}
Being $A$ injective, $H$ is positive and invertible, thus
\begin{equation*}
\mathrm{prox}_\varphi^H : X \to X \qquad \mathrm{prox}_\varphi^H(x) = \argmin_{z \in X} \Big\{ \varphi(z) + \frac{1}{2}\nor{z - x}_H^2 \Big\}
\end{equation*}
where
\begin{equation*}
\nor{z - x}_H^2 = \langle A^* A (z - x), z- x\rangle = \langle A(z - x), A(z - x)\rangle = \nor{A(z - x)}^2.
\end{equation*}
Therefore
\begin{equation}\label{eq:proA}
\mathrm{prox}_\varphi^H(x) = \argmin_{z \in X} \Big\{ \varphi(z) + \frac{1}{2}\nor{A(z - x)}^2 \Big\}.
\end{equation}
On the other hand, since $\varphi\circ A^\dag : Y \to \rone\cup\{+\infty\}$ is convex and lower semicontinuous the corresponding proximity operator with respect to $\nor{\cdot}$ is well-defined and
\begin{equation*}
\mathrm{prox}_{\varphi\circ A^\dag} : Y \to Y \qquad \mathrm{prox}_{\varphi\circ A^\dag}(y) = \argmin_{t \in Y} \Big\{ \varphi(A^\dag t) + \frac{1}{2} \nor{t - y}^2 \Big\}
\end{equation*}
If we set $\overline{x} = \mathrm{prox}_\varphi^H(x)$, by \eqref{eq:proA} we obtain
\begin{equation*}
\varphi(\overline{x}) + \frac{1}{2} \nor{A \overline{x} - A x}^2 \leq \varphi(z) + \frac{1}{2}\nor{A z - A x}^2 \quad \forall z \in X.
\end{equation*}
Moreover, by setting $\overline{y} = A \overline{x}$, taking $t \in Y$, with $t = t_1 + t_2$ such that $t_1 \in R(A)$ and $t_2 \in R(A)^\perp$ and $z = A^\dag t \in X$, we have
\begin{align*}
\varphi(A^\dag t) + \frac{1}{2}\nor{t - A x}^2 &= \varphi(A^\dag t) + \frac{1}{2}\nor{t_1 + t_2 - A x}^2 \\
& = \varphi( A^\dag t) + \frac{1}{2}\nor{P_{R(A)} t - A x}^2 + \nor{t_2}^2 \\
& \geq \varphi(A^\dag t) + \frac{1}{2}\nor{A A^\dag t - A x}^2 \\
& = \varphi(z) + \frac{1}{2}\nor{A z - A x}^2 \\
& \geq \varphi(\overline{x}) + \frac{1}{2}\nor{A \overline{x} - A x}^2 \\
& = \varphi(A^\dag \overline{y}) + \frac{1}{2}\nor{\overline{y} - A x}^2. \\
\end{align*}
We finally get
\begin{equation*}
\overline{y} = \argmin_{t \in Y} \Big\{ \varphi (A^\dag t) + \frac{1}{2}\nor{ t - A x}^2 \Big\} = \mathrm{prox}_{\varphi\circ A^\dag}(A x)
\end{equation*}
and  thus using \eqref{lem:gids2}
\begin{equation*}
\mathrm{prox}_\varphi^H(x) = \overline{x} = A^\dag A \overline{x} = A^\dag \overline{y} = A^\dag \mathrm{prox}_{\varphi\circ A^\dag}(A x).
\end{equation*}

\end{proof}

Since in the sequel the proximity operators will be computed with respect to a variable norm $\nor{\cdot}_H$, we are interested in the behavior of the proximity operator when $H$ varies.

\begin{lemma}\label{lem:H_varia} Let $H_1$ and $H_2$ two continuous positive selfadjoint operators on $X$, both bounded  from below. It holds
\beeq{eq:H_varia}
{\nor{\prox_\varphi^{H_1}(z)-\prox_\varphi^{H_2}(z)}\leq \nor{H_1^{-1}}\nor{(H_1-H_2)(z-\prox_\varphi^{H_2}(z))}.  }
\end{lemma}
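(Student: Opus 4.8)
The plan is to exploit the variational characterization of the proximity operator recorded in \eqref{eq:prox_first} together with the monotonicity of the subdifferential. Write $p_1=\prox_\varphi^{H_1}(z)$ and $p_2=\prox_\varphi^{H_2}(z)$. By the first-order optimality condition, $-H_1(p_1-z)\in\partial\varphi(p_1)$ and $-H_2(p_2-z)\in\partial\varphi(p_2)$. Since $\varphi$ is convex, $\partial\varphi$ is a monotone operator, so pairing the difference of these two subgradients with $p_1-p_2$ gives
\[
\scal{-H_1(p_1-z)+H_2(p_2-z)}{p_1-p_2}\geq 0,
\]
which is equivalent to $\scal{H_1(p_1-z)-H_2(p_2-z)}{p_1-p_2}\leq 0$.

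The key algebraic step is to isolate a coercive term in $H_1$ and a perturbation term carrying the factor $H_1-H_2$. I would add and subtract $H_1(p_2-z)$, writing
\[
H_1(p_1-z)-H_2(p_2-z)=H_1(p_1-p_2)+(H_1-H_2)(p_2-z).
\]
Substituting this into the previous inequality and rearranging yields
\[
\scal{H_1(p_1-p_2)}{p_1-p_2}\leq \scal{(H_1-H_2)(z-p_2)}{p_1-p_2}.
\]

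To finish, I would bound the left-hand side from below by the coercivity of $H_1$ from \eqref{eq:norm_eq}, namely $\scal{H_1 x}{x}=\nor{x}_{H_1}^2\geq \nor{H_1^{-1}}^{-1}\nor{x}^2$ taken with $x=p_1-p_2$, and the right-hand side from above by Cauchy--Schwarz, obtaining
\[
\frac{1}{\nor{H_1^{-1}}}\nor{p_1-p_2}^2\leq \nor{(H_1-H_2)(z-p_2)}\,\nor{p_1-p_2}.
\]
Dividing by $\nor{p_1-p_2}$ — the claimed inequality being trivial when $p_1=p_2$ — gives exactly \eqref{eq:H_varia}.

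I do not expect a genuine obstacle here: the only point requiring care is the sign bookkeeping in the monotonicity inequality and the choice of adding and subtracting $H_1(p_2-z)$ rather than $H_2(p_1-z)$. The former is what produces the coercive term in $H_1$, whose inverse norm $\nor{H_1^{-1}}$ is precisely the constant appearing in the estimate, while simultaneously leaving the clean perturbation $(H_1-H_2)(z-p_2)$ evaluated at $p_2=\prox_\varphi^{H_2}(z)$, matching the right-hand side of the statement.
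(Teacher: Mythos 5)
Your proposal is correct and follows essentially the same route as the paper: the paper also starts from $H_i(z-\prox_\varphi^{H_i}(z))\in\partial\varphi(\prox_\varphi^{H_i}(z))$, obtains your monotonicity inequality by summing the two subgradient inequalities, and rearranges to the same key estimate $\scal{H_1(p_1-p_2)}{p_1-p_2}\leq\scal{(H_1-H_2)(z-p_2)}{p_1-p_2}$. The only difference is cosmetic — you invoke monotonicity of $\partial\varphi$ as a known fact and spell out the final coercivity-plus-Cauchy--Schwarz step that the paper leaves implicit.
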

\begin{proof}  
By \eqref{eq:prox_first} it follows
\[
H_i(z-\prox_\varphi^{H_i}(z))\in \partial \varphi(\prox_\varphi^{H_i}(z)),\quad i=1,2. \]
Then, by definition of subdifferential the following inequalities hold true
\begin{align*}
\varphi(\prox_\varphi^{H_2}(z)) \geq &\varphi(\prox_\varphi^{H_1}(z))\\ &\qquad+\langle H_1(z-\prox_\varphi^{H_1}(z)), \prox_\varphi^{H_2}(z)-\prox_\varphi^{H_1}(z)\rangle \end{align*}
\begin{align*}
\varphi(\prox_\varphi^{H_1}(z)) \geq &\varphi(\prox_\varphi^{H_2}(z))\\ &\qquad+\langle H_2(z-\prox_\varphi^{H_2}(z)), \prox_\varphi^{H_1}(z)-\prox_\varphi^{H_2}(z)\rangle .
 \end{align*}
Summing up them, we obtain
\[
0\geq \langle H_2(z-\prox_\varphi^{H_2}(z))-H_1(z-\prox_\varphi^{H_1}(z)), \prox_\varphi^{H_1}(z)-\prox_\varphi^{H_2}(z)\rangle,
\]
and equivalently
\begin{multline*}
\langle H_1\prox_\varphi^{H_1}(z)-H_2\prox_\varphi^{H_2}(z),\prox_\varphi^{H_1}(z)-\prox_\varphi^{H_2}(z)\rangle\leq\\ \langle (H_1-H_2)z,\prox_\varphi^{H_1}(z)-\prox_\varphi^{H_2}(z)\rangle. 
\end{multline*}
Adding and subtracting the same term, the previous inequality can also be written as
\begin{multline*}
\langle H_1(\prox_\varphi^{H_1}(z)-\prox_\varphi^{H_2}(z)),\prox_\varphi^{H_1}(z)-\prox_\varphi^{H_2}(z)\rangle\leq\\ \langle (H_1-H_2)(z-\prox_\varphi^{H_2}(z),\prox_\varphi^{H_1}(z)-\prox_\varphi^{H_2}(z)\rangle,
\end{multline*}
from which \eqref{eq:H_varia} follows. 
\end{proof}
Note that in the previous lemma  $H_1$ and $H_2$ play a symmetric role, so that they can be interchanged. 
\begin{remark}\label{rem:vartut} Combining \eqref{eq:non_exp} and \eqref{eq:H_varia}, we get:
\begin{align}
\nonumber\nor{\mathrm{prox}_J^{H_1} z_1 - \mathrm{prox}_J^{H_2} z_2} &\leq \nor{\mathrm{prox}_J^{H_1} z_1 - \mathrm{prox}_J^{H_1} z_2} + \nor{\mathrm{prox}_J^{H_1} z_2 - \mathrm{prox}_J^{H_2} z_2} \\
\label{eq:vartut}&\leq \big(\nor{H_1} \nor{H_1^{-1}}\big)^{1/2} \nor{z_1 - z_2} \\
\nonumber&\qquad\qquad+ \nor{H_1^{-1}}\nor{(H_1 - H_2)(z_2 - \mathrm{prox}_J^{H_2} z_2)},
\end{align}
for every $z_1,z_2\in X$ and $H_1,H_2$ continuous and positive selfadjoint operators on $X$, bounded  from below.
\end{remark}

\section{Setting the minimization problem}\label{sec:sett}

In this section we collect some basic properties of the solutions of problem $(\mathcal{P})$. The following will be standing hypotheses throughout the paper. 
\[(SH)\qquad
\left\{\begin{array}{l}
F:\Omega\subseteq X\to Y \text{ is G\^ateaux differentiable}\\

\ \\

J:X\to \rone\cup\{+\infty\} \text{ is proper, lower semicontinuous and convex. }
\end{array}\right.
\]
Recall that $J$ proper means the {\em effective domain} $\mathrm{dom}(J):=\{x\in X\,:\, J(x)< +\infty\}$ is nonempty.

Without loss of generality, we shall assume $y=0$ in the problem $(\mathcal{P})$, since the general case can be recovered just by replacing $F$ with $F-y$. Thus, hereafter, the following optimization problem will be considered 
\[
\tag{$\mathcal{P}_0$} \min_{x \in X}\, \frac{1}{2} \nor{F(x)}^2 + J(x):=\Phi_0(x).
\]
The functional $\Phi_0$ is in general nonconvex and searching for global minimizers turns out to be a challenging task. Therefore, the focus of this paper is on local minimizers of $\Phi_0$, whose existence shall be assumed from now on. Generally speaking,  Gauss-Newton methods are of a local character, and allow to find a local minimizer. As it is well-known a \emph{local minimizer }$x_*$ of  $\Phi_0$ is a point such that $x_*\in\mathrm{dom} (J)\cap\Omega$ and there exists a neighborhood $U$ of $x_*$ such that $\Phi_0(x_*)\leq\Phi_0(x)$ for all $x\in U$. 

By the way, hypotheses $(SH)$ are not enough to guarantee the existence of a global minimizer of the problem  $(\mathcal{P}_0)$. Such existence can be proved relying on the Weierstrass theorem as soon as we impose $F$ to be weak to weak continuous and $\Phi_0$ (weakly) coercive, namely $\lim_{\nor{x}\rightarrow+\infty} \Phi_0(x)=+\infty.$ 

We start by providing first order conditions for local minimizers.

\begin{proposition}\label{prop:ex} Suppose $(SH)$ are satisfied and let $x_*\in \Omega$ be a local minimizer of $\Phi_0$. Then the following {\em stationary condition} holds
\[-F'(x_*)^*F(x_*)\in\partial J(x_*).\]
Moreover, if $F'(x_*)$ is injective and $R(F'(x_*))$ is closed, then  $x_*$ satisfies the {\em fixed point equation}
\[x_*=\prox_J^{H(x_*)}(x_*-F'(x_*)^\dag F(x_*)),\] with $H(x_*):=F'(x_*)^*F'(x_*)$.
\end{proposition}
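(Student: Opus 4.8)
The plan is to establish the two assertions in turn: first the stationary inclusion by a direct first-variation argument, and then the fixed-point equation as a short consequence via the prox characterization \eqref{eq:prox_first}. Writing $g(x)=\frac12\nor{F(x)}^2$, the chain rule for the G\^ateaux derivative gives $g'(x)=F'(x)^*F(x)$, since the directional derivative in a direction $h$ equals $\scal{F(x)}{F'(x)h}=\scal{F'(x)^*F(x)}{h}$. Thus the claimed stationary condition is precisely $-g'(x_*)\in\partial J(x_*)$.

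To obtain this inclusion I would exploit the smooth-plus-convex structure directly, rather than a subdifferential sum rule, since $\Phi_0$ is nonconvex. Fix an arbitrary $v\in\mathrm{dom}(J)$ and set $x_t=x_*+t(v-x_*)$ for small $t\in(0,1)$, so that $x_t$ lies in the neighborhood $U$ on which $x_*$ is minimal. Local minimality gives $g(x_*)+J(x_*)\leq g(x_t)+J(x_t)$, while convexity of $J$ gives $J(x_t)\leq(1-t)J(x_*)+tJ(v)$; combining these and cancelling the common $J(x_*)$ terms yields
\[
\frac{g(x_*)-g(x_t)}{t}\leq J(v)-J(x_*).
\]
Letting $t\to 0^+$ and using differentiability of $g$, the left-hand side tends to $-\scal{g'(x_*)}{v-x_*}$, so $J(v)\geq J(x_*)+\scal{-g'(x_*)}{v-x_*}$ for every $v$ (trivially so when $J(v)=+\infty$). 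By definition of the subdifferential this reads $-F'(x_*)^*F(x_*)\in\partial J(x_*)$. This first-variation step, mixing the one-sided differentiability of the smooth term with the convexity inequality for $J$, is the only genuinely delicate point, precisely because nonconvexity of $\Phi_0$ forbids simply writing $0\in\partial\Phi_0(x_*)$ with a sum rule.

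For the fixed-point equation, set $A=F'(x_*)$ and $H=A^*A$. Since $A$ is injective with closed range, one has $A^\dag=(A^*A)^{-1}A^*$, giving the key algebraic identity $HA^\dag=A^*A(A^*A)^{-1}A^*=A^*$. I would then check the prox characterization \eqref{eq:prox_first} with $p=x_*$ and $z=x_*-A^\dag F(x_*)$: here $H(p-z)=HA^\dag F(x_*)=A^*F(x_*)$, so the inclusion $0\in\partial J(p)+H(p-z)$ becomes $-A^*F(x_*)\in\partial J(x_*)$, which is exactly the stationary condition just proved. Hence $x_*=\prox_J^{H(x_*)}(x_*-F'(x_*)^\dag F(x_*))$, and this second part is a routine computation resting entirely on the identity $HA^\dag=A^*$ once the stationary inclusion is in hand.
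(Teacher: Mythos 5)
Your proof is correct. The second half --- verifying \eqref{eq:prox_first} at $p=x_*$, $z=x_*-F'(x_*)^\dag F(x_*)$ via the identity $H(x_*)F'(x_*)^\dag=F'(x_*)^*$ --- is in substance the same computation as the paper's, which adds $H(x_*)x_*$ to both sides of the stationary inclusion and then inverts $H(x_*)$. For the stationary condition the paper proceeds a little differently: it writes the first-order optimality condition $\Phi_0'(x_*,v)\geq 0$ in terms of directional derivatives, deduces $\scal{-F'(x_*)^*F(x_*)}{v}\leq J'(x_*,v)$ for all $v\in X$, and then invokes the characterization of $\partial J(x_*)$ through the directional derivative of the convex function $J$ (Proposition 3.1.6 in Borwein--Lewis). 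Your first-variation argument along $x_t=x_*+t(v-x_*)$, combined with the convexity inequality $J(x_t)\leq(1-t)J(x_*)+tJ(v)$, reaches the subgradient inequality $J(v)\geq J(x_*)+\scal{-F'(x_*)^*F(x_*)}{v-x_*}$ in the limit directly, so it is more self-contained (no external convex-analysis lemma), at the cost of testing only directions $v-x_*$ with $v\in\mathrm{dom}(J)$ --- which is exactly what the subdifferential inclusion requires, so nothing is lost. The two implicit ingredients you rely on, namely $J(x_*)<+\infty$ (needed to cancel the $J(x_*)$ terms) and $x_t\in U\cap\Omega$ for small $t>0$, are supplied by the paper's definition of a local minimizer ($x_*\in\mathrm{dom}(J)\cap\Omega$) and the openness of $\Omega$, so the argument is complete.
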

\begin{proof}

Suppose that $x_*$ is a local minimizer of $\Phi_0$. Denoting by $\Phi_0'(x_*,v)$ the directional derivative of $\Phi_0$ at $x_*$ in the direction $v\in X$, which exists thanks to $(SH)$, the first order optimality conditions for $x_*$ implies 
\beeq{eq:Cla}
{\Phi_0'(x_*,v)\geq0 \qquad\forall v\in X.}
As a consequence of the differentiability of $F$ and the convexity of $J$ \eqref{eq:Cla} can be rewritten as
\[
-F'(x_*)^*F(x_*)v \leq J'(x,v)\qquad\forall v\in X,
\]
and consequently, by Proposition 3.1.6 in \cite{borwein99}, also as
\beeq{eq:sub}
{-F'(x_*)^*F(x_*)\in\partial J(x_*), }
 which is the stationary condition of the thesis. 
To prove that $x_*$ satisfies the fixed point equation note that adding $H(x_*)x_*$ to both members of \eqref{eq:sub} we have
\[
H(x_*)x_*-F'(x_*)^*F(x_*)\in (H(x_*)+\partial J)(x_*). 
\]
Since $H(x_*)$ is invertible, then the previous equation can be also rewritten as
\[
H(x_*)(x_*-H(x_*)^{-1}F'(x_*)^*F(x_*))\in (H(x_*)+\partial J)(x_*). 
\]
Recalling equation \eqref{eq:prox_first} and the properties enjoyed by the pseudoinverse we obtain the second assertion.
\end{proof}

\section{The algorithm - convergence analysis}\label{sec:algo}

In this section we state the main result of the paper, consisting in the study of the convergence of a generalized Gauss-Newton method for solving problem $(\mathcal{P}_0)$. The flavor  is similar to the most recent results concerning the standard Gauss-Newton method, proved  in \cite{LiZhaJin04}. We start describing some basic properties of the proposed algorithmic framework.

Fix $x_0\in \mathrm{dom}(J)$, and then, given $x_n$, define $x_{n+1}$ by setting
\beeq{eq:xn}
{ x_{n+1}= \argmin_{x\in X} \frac 1 2\nor{F(x_n)+F'(x_n)(x-x_n)}^2+J(x).
}
Note that since the quantity inside the norm has been linearized, this problem can be solved explicitly, for instance using first order methods for the minimization of nonsmooth convex functions, such as bundle methods or forward-backward methods (see \cite{HirLem93,ComWaj05}).
Writing down the first order optimality conditions we will get a similar formula to the one in Proposition \ref{prop:ex} for a minimizer $x_*$.
 
\begin{proposition} \label{prop:fix_n}  Suppose $F'(x_n)$ is injective with closed range and set $H(x_n)=F'(x_n)^*F'(x_n)$. Then, the formula   \eqref{eq:xn} defining $x_{n+1}$  is equivalent to
\beeq{eq:xn1}{
x_{n+1}=\proxj (x_n-F'(x_n)^\dag F(x_n)).}
 \end{proposition}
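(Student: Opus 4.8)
The plan is to mimic almost verbatim the argument used for the stationary/fixed-point characterization in Proposition \ref{prop:ex}, with one conceptual simplification: the functional being minimized in \eqref{eq:xn} is genuinely convex, so the first-order optimality condition is not merely necessary but also sufficient, and this is exactly what upgrades the implication into the claimed equivalence. Write $A=F'(x_n)$ and $H(x_n)=A^*A$. The objective $x\mapsto\frac12\nor{F(x_n)+A(x-x_n)}^2+J(x)$ is the sum of a smooth convex quadratic (a squared norm precomposed with an affine map) and the proper convex lsc functional $J$; since $A$ is injective with closed range it is bounded below, so the quadratic is coercive and strictly convex, guaranteeing that \eqref{eq:xn} has a unique minimizer. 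First I would write the optimality condition $0\in\partial\big(\tfrac12\nor{F(x_n)+A(\cdot-x_n)}^2+J\big)(x_{n+1})$, which is equivalent to $x_{n+1}$ solving \eqref{eq:xn}.

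Next I would apply the subdifferential sum rule. The smooth term has (Fr\'echet) derivative $v\mapsto A^*\big(F(x_n)+A(v-x_n)\big)$, and because this term is finite and continuous everywhere, the qualification condition holds and $\partial$ splits as the gradient plus $\partial J$. The optimality condition thus becomes
\[
0\in A^*\big(F(x_n)+A(x_{n+1}-x_n)\big)+\partial J(x_{n+1}).
\]
Using $A^*A=H(x_n)$ and rearranging, this reads $H(x_n)x_n-A^*F(x_n)\in\big(H(x_n)+\partial J\big)(x_{n+1})$.

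The key algebraic step is to recognize $A^*F(x_n)$ in pseudoinverse form. Since $A$ is injective with closed range, the identity recalled in Section \ref{sec:prel} gives $A^\dag=(A^*A)^{-1}A^*=H(x_n)^{-1}A^*$, hence $A^*=H(x_n)A^\dag$. Substituting yields
\[
H(x_n)\big(x_n-A^\dag F(x_n)\big)\in\big(H(x_n)+\partial J\big)(x_{n+1}).
\]
Setting $z=x_n-F'(x_n)^\dag F(x_n)$ and reading off \eqref{eq:prox_first} with $\varphi=J$ and $H=H(x_n)$, namely $H(x_n)z\in(\partial J+H(x_n))(p)\Leftrightarrow p=\proxj(z)$, I conclude $x_{n+1}=\proxj\big(x_n-F'(x_n)^\dag F(x_n)\big)$, which is \eqref{eq:xn1}.

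I do not expect any serious obstacle here; the proof is essentially a computation. The only points deserving a careful sentence are the justification of the sum rule (immediate, as the quadratic term is everywhere finite and continuous, so it is continuous at a point of $\mathrm{dom}(J)$) and the observation that convexity makes the stationarity condition sufficient as well as necessary, so that the two characterizations describe the same (unique) point and the equivalence genuinely holds in both directions.
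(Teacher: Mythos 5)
Your proposal is correct and follows essentially the same route as the paper: write the first-order optimality condition for the convex subproblem, split the subdifferential, rearrange to $H(x_n)x_n - F'(x_n)^*F(x_n) \in (H(x_n)+\partial J)(x_{n+1})$, use $F'(x_n)^* = H(x_n)F'(x_n)^\dag$, and invoke \eqref{eq:prox_first}. Your added remarks on the sum rule qualification and on convexity making stationarity sufficient (hence a genuine equivalence) are points the paper leaves implicit, but they do not change the argument.
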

 \begin{proof} Thanks to  the assumptions made on $F'(x_n)$ the operator $H(x_n)$ is invertible. Writing the first order necessary conditions, which are satisfied by $x_{n+1}$ we obtain
\begin{eqnarray*}
&&0\in F^\prime(x_n)^*[F(x_n) + F^\prime(x_n)(x_{n+1} - x_n)] + \partial J(x_{n+1}) \\[1ex]
&\iff& F^\prime(x_n)^* F^\prime(x_n) x_n - F^\prime(x_n)^* F(x_n) \in (F^\prime(x_n)^* F^\prime(x_n) + \partial J)(x_{n+1}) \\[1ex]
&\iff& x_{n+1} = (F^\prime(x_n)^* F^\prime(x_n) + \partial J)^{-1}( F^\prime(x_n)^* F^\prime(x_n) x_n - F^\prime(x_n)^* F(x_n)) \\[1ex]
&\iff& x_{n+1} = (F^\prime(x_n)^* F^\prime(x_n) + \partial J)^{-1}  F^\prime(x_n)^* F^\prime(x_n) \left( x_n - F^\prime(x_n)^\dag F(x_n)\right) \\[0.8ex]
&\iff& x_{n+1} = \prox_j^{H(x_n)}\left( x_n - F^\prime(x_n)^\dag F(x_n) \right)
\end{eqnarray*}

 \end{proof}
 
 In the next theorem we provide a local convergence analysis of the proximal Gauss-Newton method, under the generalized Lipschitz conditions on $F'$ introduced in Section \ref{sec:lip}. The proof is postponed to Section \ref{sec:proof}.

 \begin{theorem}\label{thm:1}
Suppose that (SH) are satisfied.  Let $U\subseteq\Omega$ be an open starshaped set with respect to $x_*$, where   $x_* \in \mathrm{dom} (J)\cap U$ is a  local minimizer of $\Phi_0$. Moreover assume
\begin{enumerate}
\item $F'(x_*)$ is injective with closed range;\\[0.3ex]
\item $F^\prime:\Omega\subseteq X \to \mathbb{L}(X,Y)$ is center Lipschitz continuous  of center $x_*$ with $L$ average on $U$ ($L$ as in the definition \ref{def:cenlip} and increasing); \\[0.3ex]
\item $[(1 + \sqrt{2})\kappa + 1] \alpha \beta^2 L(0)<1$, where $\alpha=\nor{F(x_*)},\beta= \nor{F'(x_*)^\dag}$, $\kappa=\nor{F'(x_*)^\dag}\nor{F'(x_*)}$, the conditioning number of $F'(x_*)$.
\end{enumerate}
\ \\[0.3ex]
Define $\bar{R}$ and $q:[0,\bar{R})\to\rone_+$ by setting
$\bar{R}=\sup\{r\in (0,R)\,:\, \gamma_0(r)r<1/\beta\}$ and 
\begin{align*}
q(r) &= \frac{ \beta}{1 - \beta \gamma_0(r) r} \bigg\{ \frac{ \beta \gamma_0(r) \gamma^c(r) r^2+ \kappa \gamma^c(r) r}{(1 - \beta \gamma_0(r) r)} + \frac{(1 + \sqrt{2}) \alpha \beta^2 \gamma_0(r)^2 r }{1 - \beta \gamma_0(r) r} \\[1.5ex]
&\qquad \qquad\qquad \qquad\qquad \qquad\qquad \qquad+ \frac{[(1 + \sqrt{2})\kappa + 1] \alpha \beta \gamma_0(r)}{1 - \beta \gamma_0(r) r}  \bigg\}.
\end{align*} 
The function $q$ is continuous and strictly increasing. If we define
\beeq{eq:rsegnato}{\bar{r}= \sup\{r\in(0,\bar{R}]\,:\, q(r)<1\},}
and we fix $r\in\rone$, with $0<r\leq \bar{r}$, such that $B_r(x_*)\subseteq U$, we get that the sequence
\begin{eqnarray*}
&&x_0 \in B_r(x_*),\\
&&x_{n+1} = \mathrm{prox}_{J}^{H(x_n)} \big(x_n - F^\prime(x_n)^\dag F(x_n)\big)
\end{eqnarray*}
with  $H(x_n):=F^\prime(x_n)^* F^\prime(x_n)$, is well-defined, i.e. $x_n \in B_r(x_*)$ and $F^\prime(x_n)$ is injective with closed range and it holds
\begin{equation*}
\nor{x_n - x_*} \leq q_0^n \nor{x_0 - x_*},
\end{equation*}
where $q_0:=q(\nor{x-x_0})<1$.
\ \\
More precisely, the following inequality is true 
\[
\nor{x_{n+1} - x_*}\leq C_2\nor{x_n-x_*}^2+C_1\nor{x_n-x_*},
\]
for  constants $C_1\geq 0$ and $C_2>0$ defined as
\begin{eqnarray*}
C_1&=&\frac{[(1 + \sqrt{2})\kappa + 1] \alpha \beta^2 \gamma_0(\rho_{x_0})}{(1 - \beta \gamma_0(\rho_{x_0}) \rho_{x_0})^2}; \\
C_2&=&\frac{\kappa\beta \gamma^c(\rho_{x_0})+ (1 + \sqrt{2}) \alpha \beta^3 \gamma_0(\rho_{x_0})^2 + \beta^2 \gamma_0(\rho_{x_0}) \gamma^c(\rho_{x_0})\rho_{x_0}}{(1 - \beta \gamma_0(\rho_{x_0}) \rho_{x_0})^2},
\end{eqnarray*}
with $\rho_{x_0}=\nor{x-x_0}$.
\end{theorem}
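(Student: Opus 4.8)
The plan is to measure the error through the fixed-point equation satisfied by $x_*$. By Proposition \ref{prop:ex}, $x_*=\prox_J^{H_*}(z_*)$ with $z_*=x_*-F'(x_*)^\dag F(x_*)$ and $H_*=F'(x_*)^*F'(x_*)$, while by construction $x_{n+1}=\prox_J^{H_n}(z_n)$ with $z_n=x_n-F'(x_n)^\dag F(x_n)$ and $H_n=F'(x_n)^*F'(x_n)$. Writing $A_n=F'(x_n)$, $A_*=F'(x_*)$ and $d_n=\|x_n-x_*\|$, I would apply the variation inequality \eqref{eq:vartut} with $H_1=H_n$, $H_2=H_*$, $z_1=z_n$, $z_2=z_*$, using $\prox_J^{H_*}z_*=x_*$, to get
\[
\|x_{n+1}-x_*\|\le (\|H_n\|\|H_n^{-1}\|)^{1/2}\,\|z_n-z_*\| + \|H_n^{-1}\|\,\|(H_n-H_*)(z_*-x_*)\|.
\]
Everything then reduces to estimating the two summands in terms of $d_n$, $\alpha=\|F(x_*)\|$, $\beta=\|A_*^\dag\|$, $\kappa=\beta\|A_*\|$ and $\gamma_0,\gamma^c$.

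As long as $\beta\gamma_0(d_n)d_n<1$ (i.e. $d_n<\bar R$), the center Lipschitz hypothesis and Remark \ref{rem:gam} give $\|A_n-A_*\|\le\gamma_0(d_n)d_n$, hence $\|(A_n-A_*)A_*^\dag\|\le\beta\gamma_0(d_n)d_n<1$; Lemma \ref{lem:daga} then certifies that $A_n$ is injective with closed range (so the iteration is well-defined) and yields $\|A_n^\dag\|\le \beta/(1-\beta\gamma_0(d_n)d_n)$ and $\|A_n^\dag-A_*^\dag\|\le \sqrt2\,\beta^2\gamma_0(d_n)d_n/(1-\beta\gamma_0(d_n)d_n)$. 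Since $\|H_n^{-1}\|=\|A_n^\dag\|^2$ and $\|A_n\|\le \kappa/\beta+\gamma_0(d_n)d_n$, the condition-number factor takes the clean form $(\|H_n\|\|H_n^{-1}\|)^{1/2}=\|A_n\|\|A_n^\dag\|\le(\kappa+\beta\gamma_0(d_n)d_n)/(1-\beta\gamma_0(d_n)d_n)$. For the first summand I would then use the Gauss-Newton decomposition, obtained from $A_n^\dag A_n=I$,
\[
z_n-z_* = (A_*^\dag-A_n^\dag)F(x_*) + A_n^\dag\bigl(F(x_*)-F(x_n)-A_n(x_*-x_n)\bigr),
\]
and bound the residual by \eqref{eq:genparc} in the form of Remark \ref{rem:gam}, $\|F(x_*)-F(x_n)-A_n(x_*-x_n)\|\le\gamma^c(d_n)d_n^2$, so that $\|z_n-z_*\|\le \alpha\|A_n^\dag-A_*^\dag\|+\|A_n^\dag\|\gamma^c(d_n)d_n^2$.

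For the metric-variation summand I would split $H_n-H_*=A_n^*(A_n-A_*)+(A_n-A_*)^*A_*$ and use $z_*-x_*=-A_*^\dag F(x_*)$ together with $A_*A_*^\dag=P_{R(A_*)}$ (so $\|A_*A_*^\dag F(x_*)\|\le\alpha$); this yields $\|(H_n-H_*)(z_*-x_*)\|\le\alpha\gamma_0(d_n)d_n\bigl(\kappa+1+\beta\gamma_0(d_n)d_n\bigr)$. Multiplying the pieces by the appropriate factors and collecting everything over the common denominator $(1-\beta\gamma_0(d_n)d_n)^2$ reproduces exactly the $\gamma^c$-terms, the $(1+\sqrt2)$-terms and the $[(1+\sqrt2)\kappa+1]$-term of $q$, i.e. $\|x_{n+1}-x_*\|\le q(d_n)d_n=C_2d_n^2+C_1d_n$ with $C_1,C_2$ evaluated at $d_n$.

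It then remains to close the induction. The functions $\gamma_0,\gamma^c$ are continuous and increasing by Proposition \ref{prop:gam}, and $r\mapsto1/(1-\beta\gamma_0(r)r)$ is increasing on $[0,\bar R)$, so $q$ is continuous and strictly increasing; moreover $q(0)=[(1+\sqrt2)\kappa+1]\alpha\beta^2L(0)$, which assumption (3) forces to be $<1$, guaranteeing $\bar r>0$. For $0<r\le\bar r$ with $B_r(x_*)\subseteq U$ and $x_0\in B_r(x_*)$, set $q_0=q(\rho_{x_0})<1$. I would show by induction that $x_n\in B_r(x_*)$ implies $d_n\le\rho_{x_0}\le r<\bar R$, so the estimates above apply, $A_n$ is injective with closed range, and $\|x_{n+1}-x_*\|\le q(d_n)d_n\le q_0 d_n$; since $q_0<1$ this keeps $x_{n+1}\in B_r(x_*)$ and iterating gives $\|x_n-x_*\|\le q_0^{\,n}\|x_0-x_*\|$. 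The quadratic rate for zero-residual problems is the case $\alpha=0$, which makes $C_1=0$. The main obstacle is the bookkeeping in the assembly: arranging the bounds so the constants align into the precise form of $q$, in particular keeping the condition-number factor in the tight form $(\kappa+\beta\gamma_0 d_n)/(1-\beta\gamma_0 d_n)$ and bounding the metric-variation term without a spurious factor of $2$, which is exactly what the splitting of $H_n-H_*$ combined with $A_*A_*^\dag=P_{R(A_*)}$ delivers.
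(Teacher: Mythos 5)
Your argument is correct and follows essentially the same route as the paper: the same application of \eqref{eq:vartut} at the fixed point $x_*=\prox_J^{H(x_*)}\big(x_*-F'(x_*)^\dag F(x_*)\big)$, the same decomposition of $z_n-z_*$ via Lemma \ref{lem:daga} and \eqref{eq:genparc}, the same splitting of $H(x_n)-H(x_*)$ applied to $F'(x_*)^\dag F(x_*)$ using $F'(x_*)F'(x_*)^\dag=P_{R(F'(x_*))}$, and the same inductive closure. The only difference is organizational: the paper factors the key estimate into Proposition \ref{prop:gtilde} and the induction into the abstract fixed-point Proposition \ref{prop:fp}, whereas you carry both out in a single pass.
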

Since $\bar{r}$ is chosen as the biggest value ensuring $q(r)\leq 1$  (a sufficient condition making the Gauss-Newton sequence convergent), $\bar{r}$  can be thought as the radius of the basin of attraction around the local minimum point $x_*$, even though in general we can't prove the optimality of this value.

\begin{remark} An analogous theorem is true if in the assumption 2 we suppose $F'$ to satisfy the radius Lipschitz condition. All the statements remain true, just replacing $\gamma^c$ with $\gamma_1$. 
For instance the expression of $q(r)$ becomes
\begin{align*}
q(r) &= \frac{ \beta}{1 - \beta \gamma_0(r) r} \bigg\{ \frac{ \beta \gamma_0(r) \gamma_1(r) r^2+ \kappa \gamma_1(r) r}{1 - \beta \gamma_0(r) r} + \frac{(1 + \sqrt{2}) \alpha \beta^2 \gamma_0(r)^2 r }{1 - \beta \gamma_0(r) r} \\[1.5ex]
&\qquad \qquad\qquad \qquad\qquad \qquad\qquad \qquad+ \frac{[(1 + \sqrt{2})\kappa + 1] \alpha \beta \gamma_0(r)}{1 - \beta \gamma_0(r) r}  \bigg\}.
\end{align*} 
\end{remark}

\begin{remark} 
The hypotheses we impose are in line with the state-of-art literature about classical Gauss-Newton method ($J=0$), see \cite{LiZhaJin04}. It is worth noting that the expression of $\bar{r}$ is not affected by the choice of $J$.
On the other hand, the presence of the function $J$ reduces the radius of convergence of the Gauss-Newton method. Indeed, the expression for $\bar{r}$ obtained in \eqref{eq:rsegnato}, which is  valid also in the case $J=0$ is always smaller than the maximum radius of convergence  that can  be derived from  equation (3.4)  in  \cite{LiZhaJin04}, namely 
\[
r_0=\sup\{r\in(0,\bar{R})\,:\, q_0(r)<1\},
\]
with 
\[
q_0(r)=\frac{ \beta}{1 - \beta \gamma_0(r) r} \big\{ \gamma^c(r)r+\sqrt{2}\beta\alpha\gamma_0(r) \big\}.
\]
The reason is that the bound \eqref{eq:non_exp} we use, is not sharp in case $J=0$, and this causes an additional term in the expression of $q(r)$. 
\end{remark}

{\em Conditions ensuring quadratic convergence.} As in the classical case, also with the additional term $J$, for zero residual problems quadratic convergence holds. In fact, from the expression of $C_1$, we see that $C_1=0$ if $\alpha=0$, i.e. $F(x_*)=0$.
\ \\
\subsection{The case of constant average $L$}
In case the function $L$ is constant, we can derive also an explicit expression for the maximum ray of convergence $\bar{r}$. 

\begin{corollary} Let the assumptions of Theorem \ref{thm:1} be satisfied and moreover assume $F'(x_*)$ to be center Lipschitz continuous of center $x_*$ with {\em constant} average $L$ on $U$.  Define $q:[0,1/(\beta L))\to\mathbb{R}_+$ as
\beeq{eq:ql}{
q(r) = \frac{ \beta}{1 - \beta L r} \bigg\{ \frac{ 3(\beta L^2 r^2+ \kappa L r)}{2(1 - \beta L r)} + \frac{(1 + \sqrt{2}) \alpha \beta^2 L^2 r }{1 - \beta L r} + \frac{[(1 + \sqrt{2})\kappa + 1] \alpha \beta L}{1 - \beta L r}\bigg\},}
which is continuous and strictly increasing in its domain. If we define 
\begin{align*}
h&=[(1 + \sqrt{2})\kappa + 1] \alpha \beta^2 L\quad(<1),\\ 
\bar{r}&=\frac{1}{\beta L}\left[-\left(2+\frac {3\kappa} 2+(1+\sqrt{2})\alpha\beta^2L\right)+\sqrt{\left(2+\frac{3 \kappa} 2+(1+\sqrt{2})\alpha\beta^2L\right)^2+2(1-h)}\,\right]
\end{align*}
and we fix $r\in\mathbb{R}$ with $0<r\leq \bar{r}$ such that $B_r(x_*)\subseteq U$ the conclusions of Theorem \ref{thm:1} hold with 
 \begin{eqnarray*}
C_1&=&\frac{[(1 + \sqrt{2})\kappa + 1] \alpha \beta^2 L}{(1 - \beta L\rho_{x_0})^2}; \\
C_2&=&\beta\frac{3\kappa + (1 + \sqrt{2}) \alpha \beta^2 L^2 + 3\beta L^2\rho_{x_0}}{2(1 - \beta L \rho_{x_0})^2}.
\end{eqnarray*}

\end{corollary}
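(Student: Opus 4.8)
The plan is to read this Corollary as the specialization of Theorem \ref{thm:1} to the case of a constant $L$ average, so that the whole argument reduces to substituting closed-form values for $\gamma_0$ and $\gamma^c$ and then carrying out a single explicit quadratic solve. Since the hypotheses of Theorem \ref{thm:1} are assumed outright, all of its conclusions (well-posedness of the iteration, the linear bound $\nor{x_n-x_*}\leq q_0^n\nor{x_0-x_*}$, and the refined inequality with constants $C_1,C_2$) transfer automatically; the only real task is to make the quantities $q$, $\bar R$, $\bar r$, $C_1$, $C_2$ explicit.

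First I would invoke the last display of Remark \ref{rem:gam}, which for constant average $L$ gives $\gamma_0(r)=L$ and $\gamma^c(r)=\tfrac32 L$ for every $r$. Inserting these into the general formula for $q$ in Theorem \ref{thm:1} yields \eqref{eq:ql} at once, and the same substitution into the stated $C_1$ and $C_2$ produces the displayed constants. For the domain, the defining condition $\gamma_0(r)\,r<1/\beta$ becomes $Lr<1/\beta$, i.e. $r<1/(\beta L)$; hence, assuming the geometric radius $R$ of the starshaped set is at least $1/(\beta L)$ so that it is not the binding constraint, one has $\bar R=1/(\beta L)$, matching the announced domain $[0,1/(\beta L))$ of $q$. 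Continuity and strict monotonicity can either be quoted from Theorem \ref{thm:1} or checked directly: writing $q(r)=\beta\,p(r)/(1-\beta L r)^2$ with $p(r)=\tfrac32(\beta L^2 r^2+\kappa L r)+(1+\sqrt{2})\alpha\beta^2 L^2 r+[(1+\sqrt{2})\kappa+1]\alpha\beta L$ a quadratic in $r$ with nonnegative coefficients and positive constant term, and noting $1-\beta L r>0$ on the domain, the numerator is nonnegative and nondecreasing while the denominator strictly decreases, so $q$ is strictly increasing.

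The one genuine computation is the explicit evaluation of $\bar r=\sup\{r\in(0,\bar R]:q(r)<1\}$. Here I would first record the two endpoint facts that pin down a unique crossing: $q(0)=[(1+\sqrt{2})\kappa+1]\alpha\beta^2 L=h<1$ by hypothesis, and $q(r)\to+\infty$ as $r\to(1/(\beta L))^-$ because $p$ stays bounded away from $0$ while $(1-\beta L r)^2\to 0$. By continuity, strict monotonicity, and the intermediate value theorem there is then exactly one $r_*\in(0,1/(\beta L))$ with $q(r_*)=1$, and $\bar r=r_*$. To solve $q(r)=1$, I clear the denominator to get $\beta\,p(r)=(1-\beta L r)^2$ and substitute $t=\beta L r$; after collecting terms this collapses to the quadratic $\tfrac12 t^2+bt+(h-1)=0$, equivalently $t^2+2bt-2(1-h)=0$, with $b=2+\tfrac{3\kappa}{2}+(1+\sqrt{2})\alpha\beta^2 L$. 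Since $1-h>0$, Vieta's product of roots $-2(1-h)$ is negative, so there is a unique positive root $t=-b+\sqrt{b^2+2(1-h)}$; dividing by $\beta L$ returns precisely the stated $\bar r$, and $t<1$ (equivalently $\bar r<\bar R$) follows because $q(\bar r)=1$ lies strictly below the blow-up at $r=1/(\beta L)$.

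There is no deep obstacle here: the Corollary is bookkeeping on top of Theorem \ref{thm:1}. The step that most deserves care is the root selection and sign analysis in the quadratic, namely confirming that the admissible root is the one carrying the $+$ sign, that it is positive exactly because $h<1$, and that it indeed lands in $(0,1/(\beta L))$ so that $\bar r$ is a legitimate radius. The only other subtlety is the minor point that the identification $\bar R=1/(\beta L)$ presupposes the radius $R$ of $U$ to be large enough not to override the $\gamma_0(r)r<1/\beta$ constraint.
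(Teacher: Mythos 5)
Your proposal is correct and follows essentially the same route as the paper's own proof: substitute $\gamma_0\equiv L$ and $\gamma^c\equiv\tfrac32 L$ into the general $q$, $C_1$, $C_2$, note $q(0)=h<1$ and $q(r)\to+\infty$ as $r\to 1/(\beta L)^-$ so that $\bar r$ is the unique root of $q(r)=1$, and reduce that equation via $z=\beta Lr$ to the quadratic $z^2+(4+3\kappa+2(1+\sqrt2)\alpha\beta^2L)z-2(1-h)=0$, keeping the positive root. Your sign analysis and the check that the root lies in $(0,1)$ are if anything slightly more explicit than the paper's.
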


\begin{proof}

Using the expressions of $\gamma_0$ and $\gamma^c$ found in Remark \ref{rem:gam},  one can easily show that $\overline{R}=1/(\beta L)$ and $q$ can be written as in \eqref{eq:ql}.

As before $q$ is continuous and strictly increasing on the interval $[0,1/(\beta L))$, and
\[
q(0)=h<1,\qquad \lim_{r\to1/(\beta L)} q(r)=+\infty.
\]
Therefore $\bar{r}$ defined in \eqref{eq:rsegnato} is the unique solution in $(0,1/(\beta L))$ of the equation $q(r)=1$.   We are going to find that point explicitly by solving  the equation $q(r)=1$.
The latter  is equivalent to  the following quadratic equation
\[
z^2+(4+3\kappa+2(1+\sqrt{2})\alpha\beta^2L)z -2(1-h)=0,
\] 
with $z\in [0,1)$, $z=\beta Lr$. That equation has the two distinct solutions
\[
z=-\left(2+\frac 3 2\kappa+(1+\sqrt{2})\alpha\beta^2L\right)\pm\sqrt{\left(2+\frac 3 2\kappa+2(1+\sqrt{2})\alpha\beta^2L\right)^2+2(1-h)}.
\]
Of course, we discard the negative solution, and we keep the one with the plus sign, which can be easily checked to belong to $(0,1)$.

\end{proof}

Along the same line,  a similar result concerning the case of radius Lipschitz continuity can be proved.

\begin{corollary} Let the assumptions of Theorem \ref{thm:1} be satisfied and moreover assume $F'(x_*)$ to be radius Lipschitz continuous of center $x_*$ with {\em constant} average $L$ on $U$.  Define $q:[0,1/(\beta L))\to\mathbb{R}_+$ as
\[
q(r) = \frac{ \beta}{1 - \beta L r} \bigg\{ \frac{ \beta L^2 r^2+ \kappa L r}{2(1 - \beta L r)} + \frac{(1 + \sqrt{2}) \alpha \beta^2 L^2 r }{1 - \beta L r} + \frac{[(1 + \sqrt{2})\kappa + 1] \alpha \beta L}{1 - \beta L r}\bigg\},\]
which is continuous and strictly increasing in its domain. If we define 
\begin{align*}
h&=[(1 + \sqrt{2})\kappa + 1] \alpha \beta^2 L(<1),\\ 
\bar{r}&=\frac{1}{\beta L}\left[\left(2+\frac {\kappa} 2+(1+\sqrt{2})\alpha\beta^2L\right)-\sqrt{\left(2+\frac{ \kappa} 2+(1+\sqrt{2})\alpha\beta^2L\right)^2-2(1-h)}\,\right]
\end{align*}
and we fix $r\in\mathbb{R}$ with $0<r\leq \bar{r}$ such that $B_r(x_*)\subseteq U$ the conclusions of Theorem \ref{thm:1} hold with 
 \begin{eqnarray*}
C_1&=&\frac{[(1 + \sqrt{2})\kappa + 1] \alpha \beta^2 L}{(1 - \beta L\rho_{x_0})^2}; \\
C_2&=&\beta\frac{\kappa + (1 + \sqrt{2}) \alpha \beta^2 L^2 + \beta L^2\rho_{x_0}}{2(1 - \beta L \rho_{x_0})^2}.
\end{eqnarray*}
for $r<1/(\beta L)$.
\end{corollary}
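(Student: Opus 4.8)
The plan is to obtain this result exactly as the constant-average specialization of the radius-Lipschitz variant of Theorem~\ref{thm:1} (the variant recorded in the remark immediately following Theorem~\ref{thm:1}, in which $\gamma^c$ is everywhere replaced by $\gamma_1$), proceeding along the very same line as the proof of the preceding corollary. First I would invoke the last display of Remark~\ref{rem:gam}, which for constant $L$ gives $\gamma_0(r)=L$ and $\gamma_1(r)=L/2$ for every $r$. Substituting $\gamma_0(r)=L$ into $\bar R=\sup\{r\in(0,R):\gamma_0(r)r<1/\beta\}$ yields immediately $\bar R=1/(\beta L)$, and substituting $\gamma_0(r)=L$, $\gamma_1(r)=L/2$ into the radius expression for $q$ collapses the first brace term to $(\beta L^2r^2+\kappa Lr)/\big(2(1-\beta Lr)\big)$, producing the displayed formula for $q$ on $[0,1/(\beta L))$.

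Next I would establish the qualitative behaviour of $q$ needed to pin down $\bar r$. Continuity and strict monotonicity on $[0,1/(\beta L))$ are inherited from the radius variant of Theorem~\ref{thm:1}; alternatively they follow directly, since after clearing denominators $q$ is $\beta$ times an increasing numerator over $(1-\beta Lr)^2$, a denominator decreasing to $0$. Evaluating at the endpoints, $q(0)=[(1+\sqrt{2})\kappa+1]\alpha\beta^2L=h$, which is $<1$ by hypothesis~3 of Theorem~\ref{thm:1}, while $q(r)\to+\infty$ as $r\to(1/(\beta L))^-$ because the braced factor tends to a positive limit and $(1-\beta Lr)^2\to 0^+$. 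By the intermediate value theorem together with strict monotonicity, the equation $q(r)=1$ has a unique root in $(0,1/(\beta L))$, and that root is precisely the $\bar r$ defined in \eqref{eq:rsegnato}.

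To exhibit $\bar r$ explicitly I would set $z=\beta Lr\in[0,1)$ and clear the common denominator $(1-\beta Lr)^2=(1-z)^2$ in $q(r)=1$. A direct computation reduces the equation to the quadratic
\[
z^2-\big(4+\kappa+2(1+\sqrt{2})\alpha\beta^2L\big)z+2(1-h)=0 .
\]
Since $h<1$, the product of the roots $2(1-h)$ and their sum $4+\kappa+2(1+\sqrt{2})\alpha\beta^2L$ are both positive, so both roots are positive; moreover the larger root exceeds the half-sum $2+\frac{\kappa}{2}+(1+\sqrt{2})\alpha\beta^2L\ge 2>1$ and hence lies outside $(0,1)$. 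The unique admissible root is therefore the one carrying the minus sign, which upon division by $\beta L$ is exactly the stated $\bar r$; this also forces the discriminant to be nonnegative, consistent with the existence of the root in $(0,1)$ guaranteed above.

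Finally, the constants $C_1$ and $C_2$ are read off from the radius-variant formulas of Theorem~\ref{thm:1} by inserting $\gamma_0(\rho_{x_0})=L$ and $\gamma_1(\rho_{x_0})=L/2$, the closing bound of the statement remaining valid for $r<1/(\beta L)$. The whole argument introduces no new idea beyond the preceding corollary; the only genuinely computational point — and thus the place demanding care — is the algebraic reduction of $q(r)=1$ to the quadratic above and the verification that it is the minus-sign root that falls in $(0,1)$.
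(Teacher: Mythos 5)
Your proposal is correct and follows essentially the same route as the paper, which proves this corollary only by reference to the preceding one: substitute $\gamma_0(r)=L$ and $\gamma_1(r)=L/2$ from Remark \ref{rem:gam} into the radius-Lipschitz variant of Theorem \ref{thm:1}, observe $\bar R=1/(\beta L)$, $q(0)=h<1$ and $q(r)\to+\infty$ as $r\to 1/(\beta L)$, and reduce $q(r)=1$ via $z=\beta L r$ to the quadratic $z^2-\bigl(4+\kappa+2(1+\sqrt{2})\alpha\beta^2 L\bigr)z+2(1-h)=0$, whose unique root in $(0,1)$ is the minus-sign one (the plus-sign root exceeds the half-sum $2+\kappa/2+(1+\sqrt{2})\alpha\beta^2L\ge 2$). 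Your sign analysis correctly captures the one genuine difference from the center-Lipschitz corollary, namely that here the quadratic has a negative linear coefficient and positive constant term, so the admissible root carries the minus sign rather than the plus sign.
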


\section{Proof of Theorem \ref{thm:1}} \label{sec:proof}

We are going to state some auxiliary results for proving convergence of the algorithm discussed in the previous section.  The following proposition will be one of the building blocks to show that convergence holds. 

\begin{proposition}\label{prop:fp} Let $G:D\subseteq X\to X$, be a mapping and $x_*\in D$ a fixed point of $G$. Let  $U\subseteq D$ be an open starshaped set with respect to $x_*\in U$.   Assume  $G$ to satisfy the inequality
\beeq{eq:fp}
{
\nor{G(x)-G(x_*)}\leq q(\nor{x-x_*})\nor{x-x_*}, \qquad \text{for all  } x\in U
}
for a given  increasing function $q:[0,\overline{R})\to [0,+\infty)$, continuous at 0 and such that $q(0)<1$. Define 
$$\bar{r}=\sup\{r\in(0,\overline{R})\,:\, q(r)< 1\}.$$ 
Then $\bar{r}>0$ and given $r\in\mathbb{R}$ with $0<r\leq\bar{r}$ and $B_r(x_*)\subseteq U$, it follows  $G(B_{r}(x_*))\subseteq B_{r}(x_*)$, thus, given $x_0\in B_{r}(x_*)$ the sequence defined by setting $x_{n+1}=G(x_n)$ is well-defined. Moreover, denoting $q_0=q(\nor{x_0-x_*})$ it holds $q_0<1$ and
$$\nor{x_{n+1}-x_*}\leq q_0^n\nor{x_0-x_*}.$$
\end{proposition}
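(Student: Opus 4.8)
The plan is to show this is essentially a Banach-type fixed-point argument, but with a variable contraction factor controlled by the increasing function $q$. First I would establish that $\bar{r}>0$: since $q$ is continuous at $0$ with $q(0)<1$, there exists a neighborhood of $0$ on which $q<1$, so the supremum defining $\bar{r}$ is taken over a nonempty set and is strictly positive. For the fixed $r$ with $0<r\leq\bar{r}$ and $B_r(x_*)\subseteq U$, the monotonicity of $q$ gives $q(\rho)\leq q(r)\leq 1$ for all $\rho\in[0,r)$, which is the key fact driving everything.

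Next I would prove the invariance $G(B_r(x_*))\subseteq B_r(x_*)$. Take $x\in B_r(x_*)$, so $\nor{x-x_*}<r$. Applying the hypothesis \eqref{eq:fp} together with the fact that $x_*$ is a fixed point of $G$ (so $G(x_*)=x_*$), I get
\[
\nor{G(x)-x_*}=\nor{G(x)-G(x_*)}\leq q(\nor{x-x_*})\nor{x-x_*}.
\]
Because $\nor{x-x_*}<r\leq\bar{r}$, monotonicity of $q$ yields $q(\nor{x-x_*})\leq q(r)\leq 1$ (using that $\bar{r}$ is defined as a supremum over $\{q(r)<1\}$, so $q(\bar{r})\leq 1$ by continuity/limit), hence $\nor{G(x)-x_*}\leq\nor{x-x_*}<r$. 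This shows $G(x)\in B_r(x_*)$, so the iteration $x_{n+1}=G(x_n)$ keeps all iterates inside $B_r(x_*)$ and is therefore well-defined.

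Finally I would derive the geometric decay estimate. Fixing $x_0\in B_r(x_*)$ and setting $q_0=q(\nor{x_0-x_*})$, note $\nor{x_0-x_*}<r\leq\bar{r}$ forces $q_0<1$ (strict, since $\nor{x_0-x_*}$ lies strictly below $\bar{r}$ where $q<1$). The crucial observation is that once an iterate is closer to $x_*$, the applicable contraction factor only improves: since invariance shows $\nor{x_n-x_*}\leq\nor{x_0-x_*}$ by an easy induction, monotonicity gives $q(\nor{x_n-x_*})\leq q(\nor{x_0-x_*})=q_0$ for every $n$. Applying \eqref{eq:fp} at each step,
\[
\nor{x_{n+1}-x_*}=\nor{G(x_n)-G(x_*)}\leq q(\nor{x_n-x_*})\nor{x_n-x_*}\leq q_0\nor{x_n-x_*},
\]
and iterating this inequality from $n=0$ produces $\nor{x_{n+1}-x_*}\leq q_0^{\,n}\nor{x_0-x_*}$, as claimed. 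The main subtlety to handle carefully is the interplay at the boundary value $r=\bar{r}$: I must confirm that $q(\bar{r})\leq 1$ (rather than strict inequality) so that invariance still holds at the endpoint, which follows from taking the limit in $q(\rho)<1$ as $\rho\uparrow\bar{r}$ using continuity of $q$; the strict contraction $q_0<1$ is then guaranteed by the \emph{strict} inequality $\nor{x_0-x_*}<r$, placing the starting point in the interior where $q<1$.
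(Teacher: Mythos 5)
Your proof is correct and follows essentially the same route as the paper's: positivity of $\bar{r}$ from continuity of $q$ at $0$, invariance of the ball from $q<1$ strictly below $\bar{r}$, and the geometric estimate from monotonicity of $q$ along the non-increasing distances $\nor{x_n-x_*}$. The only imprecision is the appeal to ``$q(\bar{r})\leq 1$ by continuity/limit'' (continuity is assumed only at $0$, so this is not justified); the step is also unnecessary, since every relevant point satisfies $\nor{x-x_*}<r\leq\bar{r}$, and monotonicity of $q$ together with the definition of the supremum already give $q(\nor{x-x_*})<1$ there, which is exactly what the paper uses.
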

\begin{proof}  First note that $\bar{r}=\sup\{r\in(0,R)\,:\, q(r)< 1\}>0$ being $q(0)<1$ and $q$ continuous at 0. 
Fix $r\in\mathbb{R}$, $0<r\leq\bar{R}$ such that $B_r(x_*)\subseteq U$ and $x\in B_{r}(x_*)$.  Then by definition of $\bar{r}$, $q(\nor{x-x_*})<1$ and therefore \eqref{eq:fp} implies that 
\[
\nor{G(x)-x_*}\leq\nor{x-x_*}< r,
\]
i.e. $G(x)\in B_r(x_*)$. Thus $G(B_r(x_*))\subseteq B_r(x_*)$ and a sequence can be defined in $B_r(x_*)$ by choosing $x_0\in B_{r}(x_*)$ and setting $x_{n+1}=G(x_n)$.  Being $\nor{x_n-x_*}<\bar{r}$, again from the definition of $\bar{r}$ we have  $q(\nor{x_n-x_*})<1$, and from \eqref{eq:fp}  we get

\[\nor{x_{n+1}-x}=\nor{G(x_{n})-x_*}\leq q(\nor{x_n-x_*})\nor{x_n-x_*}<\nor{x_n-x_*}.\]
This implies that $q(\nor{x_{n+1}-x_* })\leq q(\nor{x_n-x_*})$, since $q$ is increasing. Therefore, denoting $q(\nor{x_0-x_*})=:q_0$ we get $q(\nor{x_n-x_*})\leq q_0<1$ for all $n\in\mathbb{N}$ and 
\[
\nor{x_{n+1}-x_*}\leq  q(\nor{x_n-x_*})\nor{x_n-x_*}\leq q_0\nor{x_n-x_*}\leq\ldots\leq q_0^n\nor{x_0-x^*}.
\]
\end{proof}
We now introduce some notations, allowing for rewriting the conditions which have been described in Proposition \ref{prop:ex} for a local minimizer of $\Phi_0$.  Define $G$ and $\tilde{G}$ by setting
\beeq{eq:geg}
{G(x) = x - F^\prime(x)^\dag F(x)\qquad\text{and}\qquad \tilde{G}(x) = \mathrm{prox}_{J}^{H(x)}( G(x)),}
where $H(x)=F'(x)^*F'(x)$.   The domain of $G$ and $\tilde{G}$  is the subset $\dd$ of $\Omega$ defined as 
\beeq{eq:defd}{\dd=\{x\in\Omega\,:\, \text{$F'(x)$ is injective and $R(F'(x))$ is closed}\}.}

If $x_*\in\dd$ is a local minimizer of ($\mathcal{P}_0$) the fixed point equation of Proposition \ref{prop:ex} can be restated by saying that $x_*$ is a fixed point for $\tilde G$, namely
\beeq{eq:fix}
{x_*=\tilde G(x_*).
}

\begin{proposition}\label{prop:gtilde} Assume $(SH)$ and let $x_*$ be a local  minimizer of $\Phi_0$ belonging to $\dd$. Suppose  that $U\subseteq \Omega$ is open and starshaped with respect to $x_*$. Moreover assume

\begin{itemize}
\item[i)] $F^\prime:\Omega\subseteq X \to \mathbb{L}(X,Y)$ is center Lipschitz continuous  of center $x_*$ with $L:[0,R)\to\mathbb{R}_+$ average on $U\subseteq\Omega$;
\item[ii)] $\alpha=\nor{F(x_*)}$, $\beta=\nor{F'(x_*)^\dag}$ and $\kappa=\nor{F'(x_*)^{\dag}}\nor{F'(x_*)}$.
\end{itemize} 
Then, defining  $\bar{R}$ by setting
\[
\bar{R}=\sup\{r\in (0,R)\,:\, \gamma_0(r)r<1/\beta\}
\]
it follows that for all  $r\in\rone$ with $0<r\leq \bar{R}$ and $B_r(x_*)\subseteq U$, $\tilde{G}$ satisfies
\[
\nor{\tilde{G}(x)-x_*}\leq q(\nor{x-x_*})\nor{x-x_*},  
\]
for all $x\in B_r(x_*)$, where $q:[0,\bar{R})\to\rone_+$ is defined as 
\begin{align*}
q(r) &= \frac{ \beta}{1 - \beta \gamma_0(r) r} \bigg\{ \frac{ \beta \gamma_0(r) \gamma^c(r) r^2+ \kappa \gamma^c(r) r}{(1 - \beta \gamma_0(r) r)} + \frac{(1 + \sqrt{2}) \alpha \beta^2 \gamma_0(r)^2 r }{1 - \beta \gamma_0(r) r} \\[1.5ex]
&\qquad \qquad\qquad \qquad\qquad \qquad\qquad \qquad+ \frac{[(1 + \sqrt{2})\kappa + 1] \alpha \beta \gamma_0(r)}{1 - \beta \gamma_0(r) r}  \bigg\}
\end{align*}
and it is continuous and strictly increasing.
 \end{proposition}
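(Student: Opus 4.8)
The plan is to reduce everything to the variable-metric $\prox$ estimate of Remark~\ref{rem:vartut}. Since $x_*$ is a local minimizer lying in $\dd$, Proposition~\ref{prop:ex} gives the fixed point identity $x_* = \tilde G(x_*) = \prox_J^{H(x_*)}(G(x_*))$, whence $G(x_*) - x_* = -F'(x_*)^\dag F(x_*)$. First I would record well-definedness: for $x \in B_r(x_*)$ with $r \le \bar R$, writing $\rho = \nor{x - x_*}$, the center Lipschitz hypothesis and Remark~\ref{rem:gam} give $\nor{F'(x) - F'(x_*)} \le \gamma_0(\rho)\rho$, and $\rho < \bar R$ forces $\gamma_0(\rho)\rho < 1/\beta$, so $\nor{(F'(x)-F'(x_*))F'(x_*)^\dag} < 1$; Lemma~\ref{lem:daga} then shows $F'(x)$ is injective with closed range (hence $x \in \dd$ and $\tilde G(x)$ is defined) and $\nor{F'(x)^\dag} \le \beta/(1 - \beta\gamma_0(\rho)\rho)$. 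Applying \eqref{eq:vartut} with $H_1 = H(x)$, $H_2 = H(x_*)$, $z_1 = G(x)$, $z_2 = G(x_*)$ and the fixed point identity yields the master estimate
\[
\nor{\tilde G(x) - x_*} \le \sqrt{\nor{H(x)}\,\nor{H(x)^{-1}}}\;\nor{G(x) - G(x_*)} + \nor{H(x)^{-1}}\,\nor{(H(x) - H(x_*))F'(x_*)^\dag F(x_*)}.
\]
Since $\nor{H(x)} = \nor{F'(x)}^2$ and $\nor{H(x)^{-1}} = \nor{F'(x)^\dag}^2$, the conditioning factor is $\nor{F'(x)}\nor{F'(x)^\dag}$, which I bound by $(\kappa + \beta\gamma_0(\rho)\rho)/(1 - \beta\gamma_0(\rho)\rho)$ using $\nor{F'(x)} \le \nor{F'(x_*)} + \gamma_0(\rho)\rho$ and $\beta\nor{F'(x_*)} = \kappa$.

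Next I would bound the Gauss--Newton increment. Using $F'(x)^\dag F'(x) = I$ I decompose
\[
G(x) - G(x_*) = \big(F'(x_*)^\dag - F'(x)^\dag\big)F(x_*) - F'(x)^\dag\big(F(x) - F(x_*) - F'(x)(x - x_*)\big).
\]
The first summand is controlled by the pseudoinverse perturbation bound of Lemma~\ref{lem:daga}, $\nor{F'(x)^\dag - F'(x_*)^\dag} \le \sqrt 2\,\beta^2\gamma_0(\rho)\rho/(1 - \beta\gamma_0(\rho)\rho)$, times $\nor{F(x_*)} = \alpha$; the second by $\nor{F'(x)^\dag} \le \beta/(1 - \beta\gamma_0(\rho)\rho)$ times the generalized parallelogram inequality of Remark~\ref{rem:gam}, $\nor{F(x) - F(x_*) - F'(x)(x - x_*)} \le \gamma^c(\rho)\rho^2$. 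This gives $\nor{G(x) - G(x_*)} \le \big(\sqrt 2\,\alpha\beta^2\gamma_0(\rho)\rho + \beta\gamma^c(\rho)\rho^2\big)/(1 - \beta\gamma_0(\rho)\rho)$.

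The delicate term is the metric-variation one. I would expand $H(x) - H(x_*) = F'(x)^*(F'(x) - F'(x_*)) + (F'(x) - F'(x_*))^*F'(x_*)$ and multiply on the right by $F'(x_*)^\dag$. The decisive point is that $F'(x_*)F'(x_*)^\dag = P_{R(F'(x_*))}$ has norm at most $1$, so the second piece contributes only $\nor{F'(x) - F'(x_*)} \le \gamma_0(\rho)\rho$ (with no extra factor of $\beta$ or $\kappa$), while the first contributes $\nor{F'(x)}\beta\gamma_0(\rho)\rho \le (\kappa + \beta\gamma_0(\rho)\rho)\gamma_0(\rho)\rho$. Hence $\nor{(H(x) - H(x_*))F'(x_*)^\dag} \le (\kappa + \beta\gamma_0(\rho)\rho + 1)\gamma_0(\rho)\rho$, and multiplying by $\nor{H(x)^{-1}} \le \beta^2/(1 - \beta\gamma_0(\rho)\rho)^2$ and $\nor{F(x_*)} = \alpha$ bounds the whole term. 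This projector identity is exactly what isolates the ``$+1$'' inside the factor $[(1+\sqrt 2)\kappa + 1]$.

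Finally I would substitute the three bounds into the master estimate, clear the common denominator $D = 1 - \beta\gamma_0(\rho)\rho$, and collect by powers of $\rho$; writing $\gamma_0 = \gamma_0(\rho)$ and $\gamma^c = \gamma^c(\rho)$, the quantity $D^2\nor{\tilde G(x) - x_*}$ has $\rho^3$ term $\beta^2\gamma_0\gamma^c\rho^3$, $\rho^2$ terms $\kappa\beta\gamma^c\rho^2$ and $(1+\sqrt 2)\alpha\beta^3\gamma_0^2\rho^2$ (the $\sqrt2$ from the pseudoinverse perturbation adding to the $1$ from the metric term), and $\rho$ term $[(1+\sqrt 2)\kappa + 1]\alpha\beta^2\gamma_0\rho$, which is precisely $q(\rho)\,\rho\cdot D^2$ with $q$ as stated. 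Continuity and strict monotonicity of $q$ then follow from the continuity, positivity and monotonicity of $\gamma_0$ and $\gamma^c$ established in Proposition~\ref{prop:gam} and Remark~\ref{rem:gam}, together with the fact that $r\gamma_0(r)$ and $r\gamma^c(r)$ are strictly increasing, which also makes $1 - \beta\gamma_0(r)r$ positive and strictly decreasing on $[0,\bar R)$. I expect the main obstacle to be precisely the metric-variation term: correctly using the range projector to avoid picking up a spurious extra conditioning factor, and then matching the cross terms so that the $\sqrt 2$ and the $1$ combine into the coefficients $(1+\sqrt 2)$ and $[(1+\sqrt 2)\kappa + 1]$ appearing in $q$.
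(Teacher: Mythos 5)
Your proposal is correct and follows essentially the same route as the paper's own proof: the fixed-point identity from Proposition~\ref{prop:ex}, the variable-metric proximity bound \eqref{eq:vartut} with $H_1=H(x)$, $H_2=H(x_*)$, the same decomposition of $G(x)-G(x_*)$ controlled via Lemma~\ref{lem:daga}, and the same range-projector identity for $(H(x)-H(x_*))F'(x_*)^\dag$ that produces the ``$+1$'' in $[(1+\sqrt{2})\kappa+1]$. The final bookkeeping of the powers of $\rho$ matches the paper's expression for $q$ exactly.
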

\begin{proof}
Since $x_*$ is a local minimizer of $\Phi_0$ and $x_*\in\dd$, with $\dd$ defined as in \eqref{eq:defd}, $x_*$ is a fixed point of $\tilde{G}$ (see \eqref{eq:fix}), therefore
\beeq{eq:fixcon}
{G(x_*) - \mathrm{prox}_{J}^{H(x_*)} G(x_*) = G(x_*) - x_* = F^\prime(x_*)^\dag F(x_*).}
Fix $r\in\rone$, with $0<r\leq \bar{R}$ such that $B_r(x_*)\subseteq U$ and take $x\in B_r(x_*)$. Adopting the notations of Proposition \ref{prop:gam}, as noted in Remark \ref{rem:gam}, from  the center Lipschitz hypothesis we get
\[
\nor{F'(x)-F'(x_*)}\nor{F'(x_*)^\dag}\leq \gamma_0(\nor{x-x_*})\nor{x-x_*}\beta.
\]
Recalling that Remark \ref{rem:gam} ensures that the function $\rho\mapsto \rho\gamma_0(\rho)$  is continuous, strictly increasing and  takes value 0 in 0, we have that $\bar{R}>0$ and 
\[
\nor{F'(x)-F'(x_*)}\nor{F'(x_*)^\dag}<\beta\gamma_0(r)r\leq 1, 
\]
 and thus applying Lemma \ref{lem:daga}, $F'(x)$ is injective, with closed range, and 
 \beeq{eq:bdag}
{ \nor{F'(x)^\dag}\leq \frac{\beta}{1-\beta\gamma_0(\rho_x)\rho_x }, \quad\text{where }\quad\rho_x=\nor{x-x_*}.  }
Applying inequality \eqref{eq:vartut} with $H_1=H(x)$, $H_2=H(x_*)$, $z_1=G(x)$ and $z_2=G(x_*)$, and taking into account \eqref{eq:fixcon} we get

\begin{align}
\nonumber\nor{\tilde{G}(x) - x_*}& = \nor{\mathrm{prox}_{J}^{H(x)}( G(x)) - \mathrm{prox}_{J}^{H(x_*)}( G(x_*))}\\[1ex]
\nonumber&\leq \big(\nor{H(x)} \nor{H(x)^{-1}}\big)^{1/2} \nor{G(x) - G(x_*)} \\[1ex]
\nonumber&\quad+ \nor{H(x)^{-1}}\big\lVert(H(x) - H(x_*))\big(G(x_*) - \mathrm{prox}_{J}^{H(x_*)} G(x_*)\big) \big\rVert \\[1ex]
\nonumber& = \big(\nor{H(x)} \nor{H(x)^{-1}}\big)^{1/2} \nor{G(x) - G(x_*)} \\[1ex]
\label{eq:Guno}&\quad+ \nor{H(x)^{-1}} \nor{(H(x) - H(x_*))F^\prime(x_*)^\dag F(x_*)}.
\end{align}
Moreover
\begin{align*}
\nor{H(x)} &= \nor{F^\prime(x)^*F^\prime(x)} = \nor{F^\prime(x)}^2\\[1ex]
\nor{H(x)^{-1}} &= \nor{[F^\prime(x)^*F^\prime(x)]^{-1}} = \nor{F^\prime(x)^\dag}^2.
\end{align*}
Recalling the properties of the Moore-Penrose generalized inverse in \eqref{lem:gids2} and Lemma \ref{lem:daga} we get
\begin{align*}
(H(x) - H(x_*))F^\prime(x_*)^\dag &= (F^\prime(x)^* F^\prime(x) - F^\prime(x_*)^* F^\prime(x_*))F^\prime(x_*)^\dag \\
&= F^\prime(x)^* F^\prime(x) F^\prime(x_*)^\dag - F^\prime(x_*)^* F^\prime(x_*) F^\prime(x_*)^\dag \\
&= F^\prime(x)^* F^\prime(x) F^\prime(x_*)^\dag - F^\prime(x)^*P_{R(F^\prime(x_*))} \\& \qquad\quad+ F^\prime(x)^*P_{R(F^\prime(x_*))} - F^\prime(x_*)^* P_{R(F^\prime(x_*))} \\
& = F^\prime(x)^*(F^\prime(x) - F^\prime(x_*))F^\prime(x_*)^\dag \\ & \qquad\quad+ (F^\prime(x) - F^\prime(x_*))^*P_{R(F^\prime(x_*)),}
\end{align*}
therefore, 
\begin{equation}\label{eq:Gdue}
\nor{(H(x) - H(x_*))F^\prime(x_*)^\dag  } \leq \left( \nor{F^\prime(x)} \nor{F^\prime(x_*)^\dag} + 1\right)\nor{F^\prime(x) - F^\prime(x_*)}.
\end{equation}
Hence, substituting in \eqref{eq:Guno} the bound derived in \eqref{eq:Gdue} we obtain
\begin{align}\label{eq:Gdue2}
\nor{\tilde{G}(x) - x_*} &\leq \nor{F^\prime(x)}\nor{F^\prime(x)^\dag} \nor{G(x) - G(x_*)} \\[1ex]
\nonumber& \quad+ \nor{F^\prime(x)^ \dag}^2 \left( \nor{F^\prime(x)} \nor{F^\prime(x_*)^\dag} + 1\right)\nor{F^\prime(x) - F^\prime(x_*)}\nor{ F(x_*)}.
\end{align}
On the other hand, thanks to the properties of the Moore-Penrose pseudoinverse reported in \eqref{lem:gids2} and the injectivity of $F'(x)$
\begin{align*}
G(x) - G(x_*)&= x - x_* - F^\prime(x)^\dag F(x) + F^\prime(x_*)^\dag F(x_*) \\
&= F^\prime(x)^\dag [F^\prime(x)(x - x_*) - F(x) + F(x_*)] \\
& \quad + (F^\prime(x_*)^\dag - F^\prime(x)^\dag) F(x_*)
\end{align*}
and thus, using Lemma \ref{lem:daga}
\begin{align}
\nonumber\nor{G(x) - G(x_*)} &\leq \nor{F^\prime(x)^\dag} \nor{F(x_*) - F(x) - F^\prime(x)(x_* - x)}\\[1ex]
\nonumber& \qquad+ \nor{F^\prime(x_*)^\dag - F^\prime(x)^\dag} \nor{F(x_*)}\\[1ex]
\nonumber&\leq  \nor{F^\prime(x)^\dag} \nor{F(x_*) - F(x) - F^\prime(x)(x_* - x)}\\[1ex]
\nonumber&\qquad + \sqrt{2}\nor{F^\prime(x)^\dag} \nor{F^\prime(x_*)^\dag} \nor{F^\prime(x) - F^\prime(x_*)} \nor{F(x_*)} \\[1ex]
\nonumber& = \nor{F^\prime(x)^\dag} \big\{ \nor{F(x_*) - F(x) - F^\prime(x)(x_* - x)}\\[1ex]
\label{eq:Gtre}&\qquad + \sqrt{2} \nor{F^\prime(x_*)^\dag} \nor{F(x_*)} \nor{F^\prime(x) - F^\prime(x_*)} \big\}
\end{align}
Substituting  \eqref{eq:Gtre} in \eqref{eq:Gdue2}
\begin{align}
\nonumber\nor{\tilde{G}(x) - x_*} &\leq  \nor{F^\prime(x)^\dag}^2 \Big\{ \nor{F^\prime(x)} \big( \nor{F(x_*) - F(x) - F^\prime(x)(x_* - x)} \\[1ex]
\label{eq:qq}& \qquad  + (1+\sqrt{2}) \nor{F^\prime(x_*)^\dag} \nor{F(x_*)} \nor{F^\prime(x) - F^\prime(x_*)} \big)\\
\nonumber& \left.\qquad  + \nor{F^\prime(x) - F^\prime(x_*)} \nor{F(x_*)} \right]\Big\}
\end{align}
Taking into account Remark \ref{rem:gam}, we can  rewrite inequality \eqref{eq:qq} as
\begin{align}
&\nonumber\nor{\tilde{G}(x) - x_*}\\ \nonumber&\leq \nor{F'(x)^\dag}^2\left\{\nor{F'(x)} \left[\gamma^c(\rho_x)\rho_x^2+(1+\sqrt{2})\|F'(x_*)^\dag\|\nor{F(x_*)}\gamma_0(\rho_x)\rho_x\right ]\right. \\
\nonumber&\quad+\nor{F(x_*)}\gamma_0(\rho_x)\rho_x \Big\}&
\end{align}
To find a  bound for the quantity $\nor{F'(x)}$, recall that $\kappa$ is the conditioning number of $F'(x_*)$, i.e. $\kappa:=\nor{F'(x_*)^\dag}\nor{F'(x_*)}$, and by the triangular inequality and Remark \eqref{rem:gam} we get $\nor{F'(x)}\leq\nor{F'(x)-F'(x_*)}+\nor{F'(x_*)}\leq \gamma_0(\rho_x)\rho_x+\kappa/\beta$.  Thus, recalling \eqref{eq:bdag}, we finally obtain
\begin{align}
&\nonumber\nor{\tilde{G}(x) - x_*} \\ \nonumber&\leq\frac{\beta^2}{\left(1-\beta\gamma_0(\rho_x)\rho_x\right)^2 }\Big\{(\gamma_0(\rho_x)\rho_x+\kappa/\beta) \Big[\gamma^c(\rho_x)\rho_x^2+(1+\sqrt{2})\beta\alpha\gamma_0(\rho_x)\rho_x\Big]  \\
\nonumber&\hspace{9cm}+\alpha\gamma_0(\rho_x)\rho_x \Big\},
\end{align}
or, equivalently
\begin{align*}
\nor{\tilde{G}(x) - x_*} &\leq\frac{\beta}{1-\beta\gamma_0(\rho_x)\rho_x }\left\{ \frac{ \beta \gamma_0(\rho_x) \gamma^c(\rho_x) \rho_x^2 + \kappa \gamma^c(\rho_x) \rho_x}{1-\beta\gamma_0(\rho_x)\rho_x } \right.\\[1.5ex]  
&\qquad \left.+\frac{(1 + \sqrt{2}) \alpha \beta^2 \gamma_0(\rho_x)^2 \rho_x}{1-\beta\gamma_0(\rho_x)\rho_x }  +\frac{[(1+\sqrt{2}) \kappa+1] \alpha \beta \gamma_0(\rho_x) }{1-\beta\gamma_0(\rho_x)\rho_x }   \right\}\rho_x \\[1.5ex]
&=q(\nor{x-x_*})\nor{x-x_*},
\end{align*}
where we set 
\begin{align*}
q(r) &= \frac{ \beta}{1 - \beta \gamma_0(r) r} \bigg\{ \frac{ \beta \gamma_0(r) \gamma^c(r) r^2+ \kappa \gamma^c(r) r}{(1 - \beta \gamma_0(r) r)} + \frac{(1 + \sqrt{2}) \alpha \beta^2 \gamma_0(r)^2 r }{1 - \beta \gamma_0(r) r} \\[1.5ex]
&\qquad \qquad\qquad \qquad\qquad \qquad\qquad \qquad+ \frac{[(1 + \sqrt{2})\kappa + 1] \alpha \beta \gamma_0(r)}{1 - \beta \gamma_0(r) r}  \bigg\}
\end{align*}
Finally, it is easy to prove that $q$ is continuous and strictly increasing relying on Remark \ref{rem:gam}. 
\end{proof}

\ \\

\noindent{\em Proof of Theorem \ref{thm:1}}

Let $G$ and $\tilde{G}$ be defined as in \eqref{eq:geg} and define $\bar{R}$ and $q$ as in Proposition \ref{prop:gtilde}. Now fix $r< \bar{r}$ such that $B_r(x_*)\subseteq U$. Then, thanks to hypothesis 3), it is possible to apply Proposition \ref{prop:fp} and to get the first part of the thesis. 
Finally, relying on the structure of the function $q$ shown in Proposition \ref{prop:gtilde}, and denoting $\rho_{x_0}=\nor{x-x_0}$,  the expression of the constants $C_1, C_2$  easily follows.

\section{Applications}\label{sec:app}

Algorithm \eqref{eq:xn1} is a two-steps algorithm, consisting of the classical Gauss-Newton step followed by a ``$J$-projection''  in a variable metric. In this section the general framework shall be specialized to solve constrained nonlinear systems of equations in the least squares sense. We remark that due to hypotheses of Theorem \ref{thm:1}  we are dealing with regular problems in the sense of Bakushinski\u{\i} and Kokurin \cite{BakKok04}. In the finite dimensional case this implies the number of equations to be greater than the number of unknowns.
This subject has been studied  in  \cite{BelMacMor04,KozMarSan97,Kan01,Ulb01} (see also references therein).  
Denoting by $C$ a closed and convex subset of $X$, we consider the problem
\beeq{eq:Pc}
{\min_{x\in C} \nor{F(x)}^2,}
which can be cast in our framework by setting $J(x)=\iota_C(x)$, where $\iota_C$ denotes the indicator function of the set $C$, i.e. $\iota_C(x)=0$ for $x\in C$ and $\iota_C(x)=+\infty$ otherwise. The proximity operator of $\iota_C$ with respect to $H(x_n)=F'(x_n)^*F'(x_n)$, turns out to be the projection onto $C$ w.r.t. the metric defined by $H(x_n)$, and therefore algorithm \eqref{eq:xn1} reads as follows
\beeq{eq:xnP}{
x_{n+1}=P_C^{H(x_n)} (x_n-F'(x_n)^\dag F(x_n)).}
Since in general  a closed form of the projection operator is not available, a further algorithm is needed for solving the projection task, which adds an inner iteration to the main procedure. We choose the {\em forward-backward algorithm} \cite{ComWaj05}. Though there are many other methods for that purpose, we do not carry out any comparison among them, because this is beyond the scope of the present paper.   
By definition of proximity operator, given $H=A^*A$, $A$ injective with closed range, we have
\begin{align*}
P_{C}^H(z)&=\prox_{\iota_C}^H(z)\\
&=\argmin\Big\{\iota_C(v)+\frac 1 2 \nor{v-z}^2_H\Big \}\\
&=\argmin \Big \{\iota_C(v)+\frac 1 2 \nor{Av-Az}^2 \Big\}.
\end{align*}
If $P_C$ denotes the projection onto the convex set $C$,  now with respect to the original metric of the space $X$,  the sequence defined by
\begin{align*}
v_0&\in X\\
v_{k+1}&=P_{C}(v_k-\sigma H(v_k-z)),
\end{align*}
with $\sigma\leq {2}/{\nor{A}^2}$, is strongly convergent to the point $P_{C}^H(z)$ \cite{rosasco2010ecml}. Eventually, the  full algorithm is
\begin{equation}\label{eq:cgn}\left[\begin{aligned}
&x_0\in C\\
&z_n=x_n-F'(x_n)^\dag F(x_n)\\[1.2ex]
&\quad \left[ \begin{aligned}& v_{0,n}\in C,\  \ \sigma_n\leq {1}/{2\nor{F'(x_n)}^2} \\[.7ex]& v_{k+1,n}=P_C(v_{k,n}-\sigma_n F'(x_n)^*F'(x_n)(v_{k,n}-z_n)) \end{aligned} \right.\\[1.2ex] 
&x_{n+1}=\lim_{k}v_{k,n}.
\end{aligned} \right.\end{equation}
It is worth noting that the inner iteration is not required when $z_n$ belongs to $C$. Indeed, in that case,  the projection leaves $z_n$  untouched and the full step of the algorithm \eqref{eq:xnP} reduces to the classical Gauss-Newton step. Such situation asymptotically occurs when $x_*$ is internal to $C$.

Algorithm \eqref{eq:cgn} requires explicit evaluation of the projection $P_C$, which can be  done for simple sets, like spheres, boxes, etc. Particularly relevant from the point of view of the applications --- see \cite{BelMacMor04} and references therein ---  is the case of box constraints in $\mathbb{R}^n$.    
  We point out that when $P_C$ can be computed explicitly the algorithm generates a sequence of feasible points, no matter when the inner iteration is stopped. This feature can be useful in forcing the sequence of iterates to remain in regions where the function is well-behaved, avoiding the Gauss-Newton step to lead to sites where the derivative is ill-conditioned.   

\section{Numerical experiments}\label{sec:num}

This section summarizes the results of the numerical experiments we carried out in order to verify the effectiveness of algorithm \eqref{eq:cgn} for solving real-life constrained nonlinear least-squares problems. In particular, we consider the case of box constraints, namely
\[
\min_{x\in \mathbb{R}^n} \nor{F(x)-y}^2,\qquad a\leq x\leq b,
\] 
where $a$ and $b$ are in $\overline{\mathbb{R}}^n$, $a\leq b$ and $C=\prod_{i=1}^n [a_i,b_i]$, $F:\Omega\supseteq C\to \mathbb{R}^m$.

The aim of the tests is to illustrate the behavior of our algorithm on some representative examples and  show that it can be successfully applied to real problems.  
The algorithm is implemented in MatLab, and the convergence tests
\[
\nor{x_{n+1}-x_n}<\epsilon, \quad \nor{v_{k+1,n}-v_{k,n}}<\epsilon
\]
 are used with precision $\epsilon=10^{-12}$. 

We remark that the implementation of the algorithm \eqref{eq:cgn} computes the projection only approximately, meaning that the internal iteration is stopped either because the required precision has been attained or because an a priori fixed maximum number of iterations has been reached. For this reason, the projection step depends on the algorithm selected to that aim and the forward-backward algorithm is just one choice among several possibilities. Furthermore, the number of evaluations of $F$ and $F'$ depends only on the number of outer iterations. Therefore we provide just the number of outer iterations needed to reach the target precision as a  measure of our method's performance.  
Yet,  the number of inner iterations does affect the number of outer iterations. In fact, in our experiments we  observed  that,  even though the algorithm is quite robust with respect to errors in the computation of the projection, the number of outer iterations can increase if the required inner precision is not attained (inner iterations reach the maximum allowed). 

The experiments are performed on some standard small residual test problems. One group of them is taken from \cite{Fle87} and a second group comes from the extensive library NLE \cite{ShaBraCut02}, which is accessible through the web site: \texttt{www.polymath-software.com/library}. 
 We considered only problems for which the solution (or a good estimate of it) is known in advance and for which  the Gauss-Newton method is known to be effective --- since our proposal in fact extends the classical one.
The problems we select in the first group are \texttt{Rosenbrock}, \texttt{Osborne1}, \texttt{Osborne2} \cite{Lootsma} and \texttt{Kowalik} \cite{KowOsb69}. They are actually unconstrained problems, to which we added  some box constraints set up in order to make the  provided solution fall on the boundary  of the box (faces, edges, vertices, etc.). Besides, on Rosenbrock's example we tried out our method also in case the global minimizer is kept outside the fixed box.

The remaining problems, \texttt{Twoeq6} and \texttt{Teneq1b}, come as truly constrained and are labeled as ``higher difficulty level'' in the NLE library.  Unlike the first group, here the constrained minimizers of \texttt{Twoeq6} and \texttt{Teneq1b} lie in the interior of the feasible set. 
 Observe in addition that the given constraints define a convex set that is not closed. More specifically, in \texttt{Teneq1b} example, the feasible region is the positive orthant of $\mathbb{R}^{10}$, excluding four coordinate hyperplanes where the first derivative is undefined. We overcome this difficulty by shrinking slightly the feasible region of a small amount $\delta$ 
\[
C_{\delta}=\Big\{(x_1,\ldots,x_{10})\,:\, x_i\geq \delta \text{ for } i=1,\ldots,4,\ \  x_i\geq 0 \text{ for } i\geq 5\Big\},
\]   
and solving the problem in the closed and convex set $C_\delta$ with $\delta=0.0001$. The same trick has been used also for solving problem \texttt{Twoeq6} too.

   The experimental protocol is different for the two test groups: in the former group, 20 points belonging to the box are randomly chosen as initial guesses and the average number of required iterations is provided; whereas in the latter  one  the algorithm is fed with two critical initializations reported in the NLE's problem description. The results are collected in Table \ref{tab:one}.

In all tests the algorithm reached the solution up to the required precision, and we did not detect any significant variation in the number of iterations depending on the starting points. Along the path we checked the condition number of $F',$ observing that it always keeps bounded from above, the problem thus remaining well-conditioned.   
In case of \texttt{Osborne2}, we saw that the classical Gauss-Newton method does not converge for some initializations, due to ill-conditioning of the derivative $F'$.  We were able to correct this behavior by setting the constraints properly around the known minimizer.

\section{Conclusions} 

This paper shows that the local theory on the convergence of the Gauss-Newton method can be extended to deal with the more general case of least squares problems with a convex penalty. The main theoretical result demonstrates that, under weak Lipschitz conditions on the derivative, convergence rates analogous to those existing for the standard case can be derived. An explicit formula for the radius of the convergence ball is also provided. A valuable application we propose concerns nonlinear equations with constraints. Our algorithm has been found effective and robust in solving such problems as shown in several numerical tests. Both the cases of solutions on the boundary of the feasible set as well as solutions in its interior as been treated successfully.\\

\noindent \textbf{Acknowledgements }
 We are grateful to Alessandro Verri for his constant support and advice.
We further thank Curzio Basso for carefully reading our paper.

\begin{center}
{\small
\begin{table}[h]
\caption{Results of numerical experiments on the test problems specified in the first column for the box constraints given by $a$ and $b$ with starting point $x_0$. The point $x_*$ is the detected minimizer, which we show with five digits precision for conciseness.}
\label{tab:one}       
\begin{tabular}[h]{l|c|c|c|c|c|c|c}
Function & $n$ & $m$ & $a$&$b$&$x_0$&$x_*$& avg. n. iter.\\
\noalign{\smallskip}\hline\hline\noalign{\smallskip}
\texttt{Rosenbrock} & 2& 2  &$\begin{bmatrix}  -3\\-2 \end{bmatrix}$& $\begin{bmatrix}  3\\0.8 \end{bmatrix}$ &$20$ random&$\begin{bmatrix}  0.89475\\0.80000 \end{bmatrix}$& 7\\
&&&&&&&\\
\texttt{Kowalik}&4&11&$\begin{bmatrix}0.1928\\0.1916\\0.1234\\0.1362\end{bmatrix}$&$\begin{bmatrix}1\\1\\1\\1\end{bmatrix}$&$20$ random&$\begin{bmatrix}0.19281\\0.19165\\0.12340\\0.13620\end{bmatrix}$&7\\
&&&&&&&\\
\texttt{Osborne1} &5&31 &$\begin{bmatrix}0.3754\\ 1\\-2\\0.01287\\0 \end{bmatrix}$ & $\begin{bmatrix}1\\ 2\\0\\1\\1 \end{bmatrix}$&$20$ random&$ \begin{bmatrix}0.37546\\1.93569\\-1.46461\\0.01287\\0.02212\end{bmatrix}$&$21$\\
&&&&&&&\\
\texttt{Osborne2}&11&65&$\begin{bmatrix} 1.31\\0.4314\\0.6336\\0.5\\0.5\\0.6\\1\\4\\2\\4.5689\\5\end{bmatrix}$&$\begin{bmatrix}1.4\\0.8\\1\\1\\1\\3\\5\\7\\2.5\\5\\6\end{bmatrix}$&$20$ random&$\begin{bmatrix}1.31000\\ 0.43157\\ 0.63367 \\0.59941\\0.75423\\0.90423\\1.36573\\4.82393\\2.39867\\4.56890\\5.67535 \end{bmatrix}$&17\\
&&&&&&&\\
\texttt{Twoeq6}&2&2&$\begin{bmatrix}  0.0001\\ 0.0001 \end{bmatrix}$  &$\begin{bmatrix}  0.9999\\ +\infty \end{bmatrix}$&$\begin{bmatrix}0.9 \\ 0.5 \end{bmatrix}\begin{bmatrix}0.6 \\ 0.1 \end{bmatrix}$&$\begin{bmatrix}0.75739\\ 0.02130\end{bmatrix}$&20\\
&&&&&&&\\
\texttt{Teneq1b} & 10&10&$\begin{bmatrix} 0.0001\\ 0.0001\\ 0.0001\\ 0.0001 \\0 \\0 \\0 \\0 \\0 \\ 0   \end{bmatrix}$& $\begin{bmatrix}  +\infty\\ +\infty\\+\infty\\+\infty\\+\infty\\+\infty\\+\infty\\+\infty\\+\infty\\+\infty\end{bmatrix}$&$ \begin{bmatrix}  1\\ 1\\20\\1\\0\\0\\0\\0\\0\\1 \end{bmatrix}\begin{bmatrix}2\\5\\40\\1\\0\\0\\0\\0\\0\\5\end{bmatrix}$&
$\begin{bmatrix} 2.99763\\ 3.96642\\ 79.99969\\ 0.00236\\ 0.00060\\ 0.00136\\0.06457\\ 3.53081\\ 26.43154\\ 0.00449 \end{bmatrix}$&
10

\end{tabular}
\end{table}}
\end{center}



\end{document}